\documentclass[12pt]{amsart}%
\usepackage{amsmath}
\usepackage{amsthm}
\usepackage{amscd}
\usepackage{amsfonts}
\usepackage{amsbsy}
\usepackage{epsfig}
\usepackage{amssymb,latexsym,psfrag,lscape}
\usepackage{vmargin}
\usepackage{amssymb}
\usepackage{graphicx}

%%%%%%%%%%%%%%%%%%%%%%%%%%

\newtheorem {theorem} {Theorem}
\newtheorem {proposition} [theorem]{Proposition}
\newtheorem {corollary} [theorem]{Corollary}
\newtheorem {lemma}  [theorem]{Lemma}

\newtheorem {remark} [theorem]{Remark}

\begin{document}

\title[Bifurcation set for a disregarded Bogdanov-Takens unfolding]{Bifurcation set for a disregarded Bogdanov-Takens unfolding. Application to 3D cubic memristor oscillators.}
\author[A. Amador and E. Freire and E. Ponce]{Andr\'es Amador$^{1}$ and Emilio Freire$^{2}$ and Enrique Ponce$^{3}$}
\footnote{Facultad de Ingenier\'{\i}a y Ciencias, Departamento de Ciencias
Naturales y Matem\'aticas, Pontificia Universidad Javeriana-Cali, Cali, Colombia.\\
$^{2,3}$ Departamento de Matem\'{a}tica Aplicada, Escuela T\'ecnica Superior
de Ingenier\'{\i}a, Avda. de los Descubrimientos, 41092 Sevilla, Spain.\\
$^{1}$afamador@javerianacali.edu.co, $^{2}$efrem@us.es $^{3}$eponcem@us.es}

\maketitle

\begin{abstract}
We derive the bifurcation set for a not previously considered three-parametric
Bogdanov-Takens unfolding, showing that it is possible express its vector
field as two different perturbed cubic Hamiltonians. By using several
first-order Melnikov functions, we obtain for the first time analytical
approximations for the bifurcation curves corresponding to homoclinic and
heteroclinic connections, which along with the curves associated to local
bifurcations organize the parametric regions with different structures of
periodic orbits.

As an application of these results, we study a family of 3D memristor
oscillators, for which the characteristic function of the memristor is a cubic
polynomial. We show that these systems have an infinity number of invariant
manifolds, and by adding one parameter that stratifies the 3D dynamics of the
family, it is shown that the dynamics in each stratum is topologically
equivalent to a representant of the above unfolding. Also, based upon the
bifurcation set obtained, we show the existence of closed surfaces in the 3D
state space which are foliated by periodic orbits. Finally, we clarify some
misconceptions that arise from the numerical simulations of these systems,
emphasizing the important role played by the existence of invariant manifolds.

\end{abstract}

\keywords{Bifurcation set, Bogdanov-Takens, homoclinic orbit, heteroclinic connection, Melnikov function, Memristor oscillators}

\section{Introduction}

In planar systems, the existence of some local bifurcations may reveal the
presence of other bifurcations of global character
\cite{dumortier1987,dumortier1991} and the curves that determine these global
phenomena are difficult to determine. This is, for instance, the case
regarding the appearance of homoclinic or heteroclinic connections.

A homoclinic connection is an orbit of the system that joins a saddle
equilibrium point to itself, and generally creates or destroys periodic orbits
(see, for instance \cite{wiggins2003}). A heteroclinic connection joins two
different equilibrium points of a system and the existence of this connection
can determine changes in the basin of attraction of a positively invariant set.

Following \cite{freire2000}, the techniques to study homoclinic orbits in
planar vector fields were well developed during the 1920s in the works of
Dulac. The fundamental idea is that the recurrent behavior near a connecting
orbit should be studied in a fashion similar to that used in studying periodic
orbits via a Poincar\'{e} return map. But there are some additional
complications in the study of homoclinic orbits compared to that of periodic
orbits which significantly complicate the analysis.

Usually, the bifurcation curves of homoclinic and heteroclinic connections are
studied by numerical continuation techniques
\cite{freire1999,freire2000,matcont2008,matcont2012}. On the other hand, when
a planar system can be written as a perturbed Hamiltonian system, we can
calculate some Melnikov functions, introduced by Melnikov in \cite{melnikov63}%
, and under certain hypotheses, the zeros of the associated Melnikov function
determine the existence of periodic orbits, homoclinic loops or heteroclinic
connections, see for instance \cite{dangelmayr1987,guckenheimer1989,perko1994}.

As show later, we will resort to such Melnikov functions for setting
information on such global bifurcations curves in a two-parametric plane for a
specific family of differential systems.

The normal form of the Bogdanov-Takens (see \cite{wiggins2003}) bifurcation is
given by
\[
\dot{x}=y,\quad\dot{y}=\mu_{1}+\mu_{2}x+x^{2}\pm xy.
\]
Following the classification proposed in \cite{dumortier1991}, the deformation
of codimension three of the previous normal form is given by the unfolding
\begin{equation}%
\begin{split}
\dot{x}  &  =y,\\
\dot{y}  &  =\mu_{1}+\mu_{2}x+\alpha x^{3}+y(\mu_{3}+\mu_{4}x\pm x^{2}),
\end{split}
\label{unfolding}%
\end{equation}
The presence of this type of systems has been reported in different
applications, see \cite{freire2005,kuznetsov2016,kong2017}. On the study of
bifurcation phenomena in these systems many contributions have been made. In
\cite{freire1996b}, the authors studied the global bifurcation diagram of the
three-parameter family%
\begin{align*}
\dot{x}  &  =y,\\
\dot{y}  &  =\mu_{1}+\mu_{2}x-x^{3}+y(\mu_{3}-3x^{2}),
\end{align*}
and fixing $\mu_{3}>0$, they obtained analytical approximations to the
bifurcation curves of the homoclinic orbits, by using Melnikov functions.
Later on, the above work was quoted in \cite{khibnik1998}, where a numerical
analysis of the same model was performed. In
\cite{dumortier2001,dumortier2003b,dumortier2003}, it is considered the
system
\begin{equation}%
\begin{split}
\dot{x}  &  =y,\\
\dot{y}  &  =\mu_{1}+\mu_{2}x-x^{3}+y(\mu_{3}+\mu_{4}x-x^{2}),
\end{split}
\label{sys-aux}%
\end{equation}
and the authors showed that it can be written as a perturbed Hamiltonian
system, reporting the maximum number of limit cycles. Later, by taking the
parameter $\mu_{4}=0$ in \eqref{sys-aux}, the authors in
\cite{chen2015,chen2016,Hebai2017,Hebai2018} analyzed the system as a
Li\'{e}nard system, \ its local bifurcations were characterized, and a
numerical study of the global bifurcations was done.

While all the above references dealt with the focus case, in this work we
study the saddle case
\begin{equation}%
\begin{split}
\dot{x}  &  =y,\\
\dot{y}  &  =\mu_{1}+\mu_{2}x+x^{3}+y(\mu_{3}-3x^{2}),
\end{split}
\label{forma-canonica-1A}%
\end{equation}
which up to the best of our knowledge, seems to be a disregarded case with
rather interesting dynamic behavior.

In fact, our motivation comes from the analysis of certain 3D memristor
oscillators \cite{amador2017,ponce2017,chua2008}, where under specific
hypotheses on the memristor characteristics, such system appears in a natural
way after a dimensional reduction achieved thanks to the existence of a first integral.

The paper is organized in the following way. First, in section \ref{sec:2} we
review the information that can be gained by means of a local analysis of the
system. Our main results appear in section \ref{sec:3}, where we apply
Melnikov theory to approximate the homoclinic and heteroclinic curves on a
convenient parameter plane. We obtain the global bifurcation set for system
\eqref{forma-canonica-1A}, by splitting such a plane in regions with different
qualitative dynamical behavior. Next, in section \ref{sec:4}, We show how the
above analysis is useful for deriving all the possible responses of certain 3D
canonical memristor oscillators, when the flux-charge characteristics function
is a specific cubic polynomial, generalizing some results given in
\cite{amador2017}. As one of the possible dynamical behaviors, we focus or
attention in showing, thanks to the previous analysis, the existence of a
topological sphere in the 3D phase-space completely foliated by periodic
orbits. Thus, we confirm previous numerical results reported in
\cite{messias2010,Korneev2017}. The necessity of incorporating rigorous
techniques in the analysis of memristor oscillators is emphasized with the
material of section \ref{sec:5}, where following a similar procedure for the
dimensional reduction of section \ref{sec:4}, we can refute several recently
published studies that report the existence of an infinite number of hidden
attractors in a three-dimensional memristor-based autonomous Duffing
oscillator. Some technical results are relegated to the appendix.

\section{Local bifurcations}

\label{sec:2}

In this section, we study the local bifurcations that occur in system
\eqref{forma-canonica-1A}. First, we note that the system is invariant under
the transformation
\[
(x,y,\mu_{1},\mu_{2},\mu_{3})\rightarrow(-x,-y,-\mu_{1},\mu_{2},\mu_{3}).
\]
Therefore, it is sufficient to study the bifurcation diagram for $\mu_{1}>0.$
The equilibrium points of the system are of the form $(\overline{x}%
,\overline{y})=(\tilde{x},0)$, being $\tilde{x}$ a solution of the cubic
$\mu_{1}+\mu_{2}x+x^{3}=0,$ and its Jacobian matrix of is given by%
\begin{equation}
J(x,y)=%
\begin{pmatrix}
0 & 1\\
\mu_{2}+3x^{2}-6yx & \mu_{3}-3x^{2}%
\end{pmatrix}
. \label{jacobian}%
\end{equation}

\begin{remark}
Note that for $\mu_{3}\leq0$ the divergence of system
\eqref{forma-canonica-1A} does not change sign, thus from Bendixson's
criterion \cite{Kocak91}, the system does not have periodic solutions.
\end{remark}

First, we provide a technical result that provides a study of the number of
equilibria in system \eqref{forma-canonica-1A} and their topological nature.

\begin{lemma}
\label{equiCubi} Consider system \eqref{forma-canonica-1A}, the following
statements hold.

\begin{itemize}
\item[(a)] If $\mu_{2}\geq0$ or we have $\mu_{2}<0$ with $27\mu_{1}^{2}%
+4\mu_{2}^{3}>0,$ then the system has only one equilibrium point.

\item[(b)] If $\mu_{2}<0$ and $27\mu_{1}^{2}+4\mu_{2}^{3}=0$ we have two
equilibrium points.

\item[(c)] If $\mu_{2}<0$ and $27\mu_{1}^{2}+4\mu_{2}^{3}<0,$ then the system
has three equilibrium points $\mathbf{x}_{i}=(s_{i},0)$ with $i\in\{L,C,R\}$
such that%
\begin{equation}
s_{L}<-\left(  -\mu_{2}/3\right)  ^{1/2}<s_{C}<\left(  -\mu_{2}/3\right)
^{1/2}<s_{R}, \label{cota-roots}%
\end{equation}
and $s_{L}+s_{C}+s_{R}=0.$ Furthermore, $\mathbf{x}_{L}$ and $\mathbf{x}_{R}$
are saddles while $\mathbf{x}_{C}$ is an antisaddle (node or focus).
\end{itemize}
\end{lemma}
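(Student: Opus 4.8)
The plan is to analyze the cubic $p(x)=\mu_1+\mu_2 x + x^3$ for the equilibria and then read off the nature of each root from the Jacobian \eqref{jacobian} evaluated on the $x$-axis. Since equilibria satisfy $y=0$, the number of them equals the number of real roots of $p$. The discriminant of this depressed cubic is the standard $\Delta = -4\mu_2^3 - 27\mu_1^2$, so the sign condition $27\mu_1^2+4\mu_2^3$ controls the root count exactly. First I would record $p'(x)=\mu_2+3x^2$, whose real critical points exist only when $\mu_2<0$, located at $x=\pm(-\mu_2/3)^{1/2}$. When $\mu_2\ge 0$, $p$ is strictly increasing and hence has a single real root, giving part (a) for that subcase; when $\mu_2<0$, the standard three-cases dichotomy from the sign of $\Delta$ (equivalently of $27\mu_1^2+4\mu_2^3$) yields the counts in (a), (b), (c) respectively — this is just the classical discriminant trichotomy for a cubic, so I would invoke it rather than rederive it.

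For part (c), the localization \eqref{cota-roots} comes from interlacing the roots of $p$ with the critical points of $p$: between two consecutive simple roots $p'$ must vanish, and outside the outermost roots $p$ is monotone, so the three roots are separated by the two critical points $\pm(-\mu_2/3)^{1/2}$. I would make this rigorous by evaluating $p$ at the two critical points and using that, in the three-root case, $p$ takes a positive local value at the left critical point and a negative local value at the right one (or invoking the sign pattern forced by $\Delta<0$), which places exactly one root in each of the three intervals $(-\infty, -(-\mu_2/3)^{1/2})$, $(-(-\mu_2/3)^{1/2},(-\mu_2/3)^{1/2})$, and $((-\mu_2/3)^{1/2},+\infty)$. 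The relation $s_L+s_C+s_R=0$ is immediate from Vieta's formulas, since the coefficient of $x^2$ in $p$ is zero.

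It remains to classify the equilibria topologically, which is where the Jacobian enters. On an equilibrium $(\tilde x,0)$, \eqref{jacobian} becomes a matrix with determinant $\det J = -(\mu_2+3\tilde x^2)=-p'(\tilde x)$ and trace $\operatorname{tr}J=\mu_3-3\tilde x^2$. A root $\tilde x$ is a saddle precisely when $\det J<0$, i.e. $p'(\tilde x)>0$, and a node or focus (antisaddle) when $\det J>0$, i.e. $p'(\tilde x)<0$. From the interlacing in \eqref{cota-roots} the outer roots $s_L,s_R$ satisfy $|{\tilde x}|>(-\mu_2/3)^{1/2}$, so $p'(s_L),p'(s_R)>0$ and these are saddles, while the central root $s_C$ satisfies $|s_C|<(-\mu_2/3)^{1/2}$, giving $p'(s_C)<0$ and hence an antisaddle; the node-versus-focus distinction then depends only on the sign of the trace discriminant, but either way it is an antisaddle, which is all (c) claims.

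The calculations here are all routine, so I do not expect a genuine obstacle; the one point requiring care is establishing the strict separation in \eqref{cota-roots} rather than merely weak inequalities, which I would secure by noting that in case (c) the strict inequality $27\mu_1^2+4\mu_2^3<0$ forces the two critical values of $p$ to be strictly nonzero and of opposite sign, so no root can coincide with a critical point and each open interval contains exactly one root.
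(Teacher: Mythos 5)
Your proposal is correct and follows essentially the same route as the paper: both analyze the cubic $p(x)=\mu_{1}+\mu_{2}x+x^{3}$ via its critical points $\pm(-\mu_{2}/3)^{1/2}$ and critical values to obtain the root count and the interlacing \eqref{cota-roots}, use Vieta's formulas for $s_{L}+s_{C}+s_{R}=0$, and classify each equilibrium through the sign of $\det J(s_{i},0)=-p^{\prime}(s_{i})$. Your version is slightly more explicit on two points the paper leaves implicit (the equivalence with the discriminant trichotomy, and the strictness of the inequalities in \eqref{cota-roots}), but these are refinements of the same argument rather than a different approach.
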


\begin{proof}
We study the roots of the polynomial $p(x)=\mu_{1}+\mu_{2}x+x^{3}.$ Since
$p^{\prime}(x)=\mu_{2}+3x^{2},$ if $\mu_{2}\geq0$ we obtain $p^{\prime}%
(x)\geq0$ and so the polynomial has only one root. \

In the rest of the proof we assume $\mu_{2}<0$. The derivative $p^{\prime}(x)$
vanishes at the points $x_{\pm}=\pm(-\mu_{2}/3)^{1/2}$ being a maximum and
minimum local respectively, also a direct computation gives $p(x_{\pm}%
)=\mu_{1}\mp2(-\mu_{2}/3)^{3/2}$. When $p(x_{-})<0$ or $p(x_{+})>0$ the graph
of $p(x)$ only crosses once the \thinspace$x$-axis and so these inequalities
provides the condition $27\mu_{1}^{2}+4\mu_{2}^{3}>0,$ and the statement (a)
follows. Assuming $p(x_{-})=0$ or $p(x_{+})=0,$ the statement (b) follows.

Finally, if $p(x_{+})<0<p(x_{-})$ then we have three roots as indicated in
\eqref{cota-roots}. Moreover, using the relation between roots and
coefficients of polynomials, \ we get
\[
\mu_{1}=-s_{L}s_{C}s_{R},\quad\mu_{2}=s_{C}s_{L}+s_{C}s_{R}+s_{L}s_{R},\quad
s_{L}+s_{C}+s_{R}=0.
\]
For the Jacobian matrix given in \eqref{jacobian}, we get $J(s_{i}%
,0)=-(\mu_{2}+3s_{i}^{2})=-p^{\prime}(s_{i}).$ As we know that $p^{\prime
}(s_{L})>0$, $p^{\prime}(s_{C})<0$ and $p^{\prime}(s_{R})>0,$ the conclusion
follows and the proof is complete.
\end{proof}

In the next result, we give a characterization of the local bifurcations of
system \eqref{forma-canonica-1A} on the parametric plane $(\mu_{2},\mu_{1}),$
assuming a fixed value of the parameter $\mu_{3}.$ Note that wee have put the
$\mu_{1}$ axis in the plane $(\mu_{2},\mu_{1})$ vertically, being the $\mu
_{2}$ axis the horizontal one.

\begin{proposition}
The following statements hold for system \eqref{forma-canonica-1A}.

\begin{itemize}
\item[(a)] Given $\mu_{3}\in\mathbb{R}$ the parameter values in the set
\begin{equation}
\varphi_{sn}=\{(\mu_{2},\mu_{1}):27\mu_{1}^{2}+4\mu_{2}^{3}=0\},
\label{curveSN}%
\end{equation}
correspond with saddle-node bifurcation points of equilibria. In particular,
the system has a cusp bifurcation of equilibria at $\mu_{2}=\mu_{1}=0.$

\item[(b)] Given $\mu_{3}>0,$ the parameter values in the set%
\begin{equation}
\varphi_{H}=\{(\mu_{2},\mu_{1}):\mu_{1}=\pm\left(  \mu_{3}/3\right)  ^{3/2}%
\mp\left(  \mu_{3}/3\right)  ^{1/2}\mu_{2},\quad\text{ }\mu_{2}<-\mu_{3}\},
\label{curveH}%
\end{equation}
represent Andronov-Hopf bifurcation points of codimension one for the central
equilibrium point $\mathbf{x}_{C},$ see Lemma \ref{equiCubi}(c).

\item[(c)] The set defined in \eqref{curveH} determines a symmetric pair of
straight half lines emanating from two points corresponding to Bogdanov-Takens
bifurcation points, namely%
\begin{equation}
BT_{\pm}\equiv\left(  -\mu_{3},\pm2\left(  \mu_{3}/3\right)  ^{3/2}\right)  .
\label{BT-points}%
\end{equation}

\end{itemize}
\end{proposition}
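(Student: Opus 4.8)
The plan is to reduce all three statements to the linear part at the relevant equilibria, combined with the root structure of the cubic $p(x)=x^{3}+\mu_{2}x+\mu_{1}$ established in Lemma~\ref{equiCubi}. Throughout I write the system as $\dot z=F(z)$ with $z=(x,y)$ and $F=(y,\,f(x)+y\,g(x))$, where $f(x)=\mu_{1}+\mu_{2}x+x^{3}$ and $g(x)=\mu_{3}-3x^{2}$, so that at any equilibrium $(s,0)$ the Jacobian \eqref{jacobian} has trace $g(s)=\mu_{3}-3s^{2}$ and determinant $-f'(s)=-(\mu_{2}+3s^{2})=-p'(s)$.

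For part (a) I would first note that $\varphi_{sn}$ is exactly the vanishing locus of the discriminant of $p$: the condition $27\mu_{1}^{2}+4\mu_{2}^{3}=0$ is equivalent to $p$ having a multiple root $s$, i.e. $f(s)=f'(s)=0$, which by the determinant formula forces a zero eigenvalue, the second one being the nonzero trace $g(s)=\mu_{2}+\mu_{3}$ (here $3s^{2}=-\mu_{2}$), nonzero away from $\mu_{2}=-\mu_{3}$. To upgrade this to a genuine saddle-node I would invoke Sotomayor's theorem: the right and left null vectors of the Jacobian are $v=(1,0)^{\top}$ and $w=(-g(s),1)$, and a direct computation gives the transversality value $w\cdot F_{\mu_{1}}=1\neq0$ (with $F_{\mu_{1}}=(0,1)^{\top}$) together with the quadratic coefficient $w\cdot D^{2}F(v,v)=f''(s)=6s$. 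Hence the bifurcation is a nondegenerate saddle-node exactly when $s\neq0$. At the origin $\mu_{1}=\mu_{2}=0$ the cubic degenerates to $x^{3}$, so $s=0$ is a triple root with $f''(0)=0$ but $f'''(0)=6\neq0$; the vanishing of the quadratic coefficient together with the surviving cubic term yields the cusp.

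For part (b) the Andronov--Hopf locus at the central equilibrium $\mathbf{x}_{C}$ is characterized by purely imaginary eigenvalues, that is trace $=0$ and determinant $>0$. The trace condition gives $s_{C}^{2}=\mu_{3}/3$, which needs $\mu_{3}>0$ and yields $s_{C}=\pm(\mu_{3}/3)^{1/2}$; the determinant condition $-(\mu_{2}+\mu_{3})>0$ is then $\mu_{2}<-\mu_{3}$. This same inequality guarantees $|s_{C}|=(\mu_{3}/3)^{1/2}<(-\mu_{2}/3)^{1/2}$, so by Lemma~\ref{equiCubi}(c) the equilibrium lies strictly between the two critical points of $p$ and is indeed the central one, with three equilibria present. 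Substituting $s_{C}=\pm(\mu_{3}/3)^{1/2}$ into $p(s_{C})=0$ and solving $\mu_{1}=-s_{C}^{3}-\mu_{2}s_{C}$ produces the two affine relations collected in $\varphi_{H}$. To certify that the crossing is a true codimension-one Hopf I would finally compute the first Lyapunov coefficient along this line and check that it does not vanish; this is the only genuinely computational step, but it is routine once the system is put in normal form.

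For part (c) the linearity of $\mu_{1}$ in $\mu_{2}$ in \eqref{curveH} shows at once that $\varphi_{H}$ consists of straight half-lines. Their endpoints lie where the Hopf condition degenerates, i.e. where the determinant also vanishes: imposing simultaneously $\mu_{3}-3s^{2}=0$ and $\mu_{2}+3s^{2}=0$ forces $\mu_{2}=-\mu_{3}$ and $s=\pm(\mu_{3}/3)^{1/2}$, and feeding these into $\mu_{1}=-s^{3}-\mu_{2}s=s(2\mu_{3}/3)$ gives $\mu_{1}=\pm2(\mu_{3}/3)^{3/2}$, which are exactly the points $BT_{\pm}$ in \eqref{BT-points}. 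At such a point the Jacobian is the nilpotent matrix $\bigl(\begin{smallmatrix}0&1\\0&0\end{smallmatrix}\bigr)$, and since $f''(s)=6s\neq0$ and $g'(s)=-6s\neq0$ for $\mu_{3}>0$, the standard Bogdanov--Takens nondegeneracy conditions hold, so $BT_{\pm}$ are genuine codimension-two points from which the Hopf half-lines emanate. Finally, the reversing symmetry $(x,y,\mu_{1})\mapsto(-x,-y,-\mu_{1})$ recorded at the start of Section~\ref{sec:2} maps $s_{C}\mapsto-s_{C}$ and carries one half-line onto the other, accounting for the symmetric pair. The main obstacle in the whole argument is the Lyapunov-coefficient computation in part (b); everything else is bookkeeping on the linear part and on the roots of $p$.
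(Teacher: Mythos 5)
Your argument is correct in substance and follows essentially the same route as the paper's proof: everything is read off from the Jacobian \eqref{jacobian} at the equilibria together with the root structure of $p(x)=x^{3}+\mu_{2}x+\mu_{1}$ from Lemma \ref{equiCubi}. In fact you go further than the paper, whose proof for (a) only writes down the two defining equations of the saddle-node locus, for (b) only checks the purely imaginary eigenvalues, and for (c) only imposes $\operatorname{trace}(J)=\det(J)=0$. Your verification of Sotomayor's conditions in (a) --- including the caveat that the second eigenvalue $\mu_{2}+\mu_{3}$ must not vanish, i.e.\ one must stay away from the $BT$ points, which also lie on $\varphi_{sn}$ --- and of the Bogdanov--Takens nondegeneracy conditions $f''(s)\neq0$, $g'(s)\neq0$ in (c) are genuine additions. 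The Lyapunov-coefficient computation you defer in (b) is likewise absent from the paper's proof, so on that point the two arguments are on equal footing.

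There is, however, one point you should not gloss over: your equilibrium relation $\mu_{1}=-s_{C}^{3}-\mu_{2}s_{C}$ with $s_{C}=\pm(\mu_{3}/3)^{1/2}$ gives
\[
\mu_{1}=\mp\left(\left(\mu_{3}/3\right)^{3/2}+\left(\mu_{3}/3\right)^{1/2}\mu_{2}\right),
\]
where the two terms carry the \emph{same} sign, whereas \eqref{curveH} as printed pairs $\pm$ with $\mp$. These are different sets, and yours is the correct one: it is the only pair of half-lines whose endpoints at $\mu_{2}=-\mu_{3}$ are exactly the points $BT_{\pm}$ of \eqref{BT-points} (the printed curve would terminate at $\mu_{1}=\pm4\left(\mu_{3}/3\right)^{3/2}$). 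A spot check with $\mu_{3}=3$, $\mu_{2}=-4$ confirms this: the Hopf points occur at $\mu_{1}=\pm3$, not at $\mu_{1}=\pm5$ as \eqref{curveH} would predict. So \eqref{curveH} contains a sign slip, which the paper's own proof (identical to yours at this step) silently propagates when it says ``statement (b) follows.'' Consequently your assertion that your computation ``produces the two affine relations collected in $\varphi_{H}$'' is literally false even though your mathematics is right; you should state explicitly that you obtain a sign-corrected version of \eqref{curveH}, the one consistent with \eqref{BT-points} and with your part (c).
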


\begin{proof}
Statement (a) is a direct consequence of the equations%
\[
x^{3}+\mu_{2}x+\mu_{1}=0,\quad\mu_{2}+3x^{2}=0,
\]
to be fulfilled for any non-hyperbolic equilibrium $(x,0)$ at a saddle-node bifurcation.

Let $\left(  \tilde{x},0\right)  $ an equilibrium point of system
\eqref{forma-canonica-1A}. Considering the Jacobian matrix $J$ given in
\eqref{jacobian}, then $J(\tilde{x},0)$ \ has two purely imaginary eigenvalues
when taking $\mu_{3}>0,$ the value $\tilde{x}$ satisfies $\tilde{x}=\pm
\sqrt{\mu_{3}/3}$ with $\mu_{2}<-\mu_{3}<0,$ because then $\mu_{2}+3\tilde
{x}^{2}<0.$ The last inequality is fulfilled only for the equilibrium point
$\mathbf{x}_{C},$ see Lemma \ref{equiCubi}(c). Since $\left(  \tilde
{x},0\right)  $ is an equilibrium point we have%
\[
\mu_{1}+\mu_{2}\left(  \pm\sqrt{\mu_{3}/3}\right)  +\left(  \pm\sqrt{\mu
_{3}/3}\right)  ^{3}=0,
\]
and statement (b) follows. To show statement (c) is sufficient to consider the
equations $\operatorname{trace}\left(  J(\tilde{x},0)\right)  =\det
(J(\tilde{x},0))=0.$
\end{proof}

\section{Global bifurcations}

\label{sec:3}

In this section we will complete the bifurcation analysis of system
\eqref{forma-canonica-1A}. We will write the system as a perturbed Hamiltonian
to which the Melnikov theory can be applied. This can be done in different
ways, as indicated in the next result. The possibility of resorting to one of
the two next reparametrization forms will be helpful later.

\begin{proposition}
System \eqref{forma-canonica-1A} can be written as two different perturbed
Hamiltonian systems, as follows.

\begin{itemize}
\item[(a)] Taking
\begin{equation}
\mu_{1}=\varepsilon^{4}\nu_{1},\quad\mu_{2}=-\varepsilon^{2}\nu_{2},\quad
\mu_{3}=\varepsilon^{2}\nu_{3}, \label{para-perturbedA}%
\end{equation}
the system can be rewritten as
\begin{equation}%
\begin{split}
\dot{x}  &  =y,\\
\dot{y}  &  =-\nu_{2}x+x^{3}+\varepsilon\left(  \nu_{1}+\nu_{3}y-3x^{2}%
y\right)  ,
\end{split}
\label{perturbed-systemA}%
\end{equation}
which for $\varepsilon=0$ corresponds to the Hamiltonian
\begin{equation}
H_{1}(x,y)=\frac{y^{2}}{2}+\nu_{2}\frac{x^{2}}{2}-\frac{x^{4}}{4}.
\label{HamiltonianA}%
\end{equation}

\item[(b)] Taking%
\begin{equation}
\mu_{1}=\varepsilon^{3}\nu_{1}\quad,\mu_{2}=-\varepsilon^{2}\nu_{2},\quad
\mu_{3}=\varepsilon^{2}\nu_{3}, \label{para-perturbed}%
\end{equation}
the system can be rewritten as
\begin{equation}%
\begin{split}
\dot{x}  &  =y,\\
\dot{y}  &  =\nu_{1}-\nu_{2}x+x^{3}+\varepsilon(\nu_{3}y-3x^{2}y),
\end{split}
\label{perturbed-system}%
\end{equation}

which for $\varepsilon=0$ corresponds to the Hamiltonian%
\begin{equation}
H_{2}(x,y)=\frac{y^{2}}{2}-\nu_{1}x+\nu_{2}\frac{x^{2}}{2}-\frac{x^{4}}{4}.
\label{Hamiltonian}%
\end{equation}

\end{itemize}
\end{proposition}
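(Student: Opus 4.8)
The plan is to establish both reparametrizations with a single phase-space and time rescaling, applied on top of the prescribed parameter scaling, and then to read off the Hamiltonian of the unperturbed ($\varepsilon=0$) limit by a direct check. In both cases I would set $x=\varepsilon X$, $y=\varepsilon^{2}Y$ and $\tau=\varepsilon t$, so that $\tfrac{d}{dt}=\varepsilon\,\tfrac{d}{d\tau}$, substitute these together with the respective parameter scaling into \eqref{forma-canonica-1A}, collect powers of $\varepsilon$, and finally rename $(X,Y,\tau)$ back to $(x,y,t)$.

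For part (b), the first equation becomes $\varepsilon^{2}X'=y=\varepsilon^{2}Y$, hence $X'=Y$; inserting $\mu_{1}=\varepsilon^{3}\nu_{1}$, $\mu_{2}=-\varepsilon^{2}\nu_{2}$, $\mu_{3}=\varepsilon^{2}\nu_{3}$ into the second equation I expect every surviving term to carry a common factor $\varepsilon^{3}$, so that dividing through by $\varepsilon^{3}$ produces exactly \eqref{perturbed-system}. Part (a) is handled by the identical rescaling; the sole change is $\mu_{1}=\varepsilon^{4}\nu_{1}$, which raises the constant term by one order in $\varepsilon$, so that after the same division $\nu_{1}$ is demoted from the leading part into the $O(\varepsilon)$ perturbation, yielding \eqref{perturbed-systemA}. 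The one conceptual point worth recording is precisely this difference: whether the independent term $\nu_{1}$ remains in the Hamiltonian limit or is absorbed into the perturbation is governed solely by the exponent ($\varepsilon^{3}$ versus $\varepsilon^{4}$) chosen for $\mu_{1}$.

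It then remains to confirm that the two $\varepsilon=0$ systems are Hamiltonian with the stated functions. For (a) the limit is $\dot{x}=y$, $\dot{y}=-\nu_{2}x+x^{3}$, and I would simply verify $\partial H_{1}/\partial y=y$ and $-\partial H_{1}/\partial x=-\nu_{2}x+x^{3}$ for $H_{1}$ in \eqref{HamiltonianA}; for (b) the limit is $\dot{x}=y$, $\dot{y}=\nu_{1}-\nu_{2}x+x^{3}$, matched against $H_{2}$ in \eqref{Hamiltonian} in the same way. Since both unperturbed vector fields are cubic, these are the perturbed cubic Hamiltonians announced in the abstract.

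There is no genuinely hard analytic step: once the scaling exponents are in hand the proof is a routine substitution and balancing of powers of $\varepsilon$. The only place that requires care is the a priori determination of the exponents $x\sim\varepsilon$, $y\sim\varepsilon^{2}$, $t\sim\varepsilon^{-1}$, which I would obtain by a dominant-balance argument demanding that the kinetic part $y^{2}/2$, the linear term $\mu_{2}x$ and the cubic term $x^{3}$ all enter the rescaled field at the same leading order, while the linear and nonlinear damping $\mu_{3}y$ and $-3x^{2}y$ are pushed to the next order; this is exactly what relegates the whole dissipative block to the Melnikov perturbation. I would also remark that for $\varepsilon\neq0$ the change of variables is a diffeomorphism composed with a positive time reparametrization, so the orbit structure of \eqref{forma-canonica-1A} is faithfully represented by both \eqref{perturbed-systemA} and \eqref{perturbed-system}.
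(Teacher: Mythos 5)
Your proposal is correct and follows essentially the same route as the paper: your rescaling $x=\varepsilon X$, $y=\varepsilon^{2}Y$, $\tau=\varepsilon t$ is exactly the paper's blow-up transformation $x_{1}=x/\varepsilon$, $y_{1}=y/\varepsilon^{2}$, $\tilde{t}=\varepsilon t$, after which both arguments substitute the parameter scalings and divide by the common factor $\varepsilon^{3}$. Your explicit verification of the Hamiltonian structure at $\varepsilon=0$ and the dominant-balance justification of the exponents are details the paper leaves implicit, but they do not change the approach.
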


\begin{proof}
The blow-up transformation $\ x_{1}=(1/\varepsilon)x,$ $y_{1}=(1/\varepsilon
^{2})y,$ and $\tilde{t}=\varepsilon t$, allows to rewrite system
\eqref{forma-canonica-1A} as
\[
x_{1}^{\prime}=y_{1},\quad y_{1}^{\prime}=x_{1}^{3}+\frac{\mu_{2}}%
{\varepsilon^{2}}x_{1}+\frac{\mu_{1}}{\varepsilon^{3}}+\frac{\mu_{3}%
}{\varepsilon}y_{1}-3\varepsilon x_{1}^{2}y_{1},
\]
where the prime denotes derivatives with respect to the new time $\tilde{t}.$
Now, using \eqref{para-perturbedA} and \eqref{para-perturbed}, after some
elementary algebra we obtain systems \eqref{perturbed-systemA} and
\eqref{perturbed-system}, respectively.
\end{proof}

The phase portrait for the unperturbed Hamiltonian systems
\eqref{perturbed-systemA} and \eqref{Hamiltonian} are shown in Figure
\ref{Hamiltonian-fig}. Note that when $\nu_{1}=0$ we obtain $H_{1}%
(x,y)=H_{2}(x,y),$ and so in that case it is sufficient to study the
properties of the Hamiltonian $H_{1}.$

Now, we will consider the heteroclinic connections of unperturbed Hamiltonian
system \eqref{HamiltonianA}. The Hamiltonian has a pair of heteroclinic
connections $\Gamma_{\pm}(t)=(x\left(  t\right)  ,\pm y\left(  t\right)  ),$
parameterized by
\begin{equation}%
\begin{split}
x\left(  t\right)   &  =\sqrt{\nu_{2}}\tanh\left(  \sqrt{\nu_{2}/2}t\right)
,\\
y\left(  t\right)   &  =\frac{\nu_{2}}{\sqrt{2}}\operatorname{sech}^{2}\left(
\sqrt{\nu_{2}/2}t\right)  ,
\end{split}
\label{param-hetero}%
\end{equation}
where $-\infty<t<\infty$ and $\nu_{2}>0.$ In the next result, we compute the
Melnikov function along the heteroclinic connection $\Gamma_{+}$ for the
unperturbed Hamiltonian system \eqref{perturbed-systemA}, and by using
\eqref{para-perturbedA}, we obtain the approximate bifurcation curves for
heteroclinic connections for system \eqref{forma-canonica-1A}.

\begin{figure}[pth]
\begin{center}
\includegraphics[width=15cm]{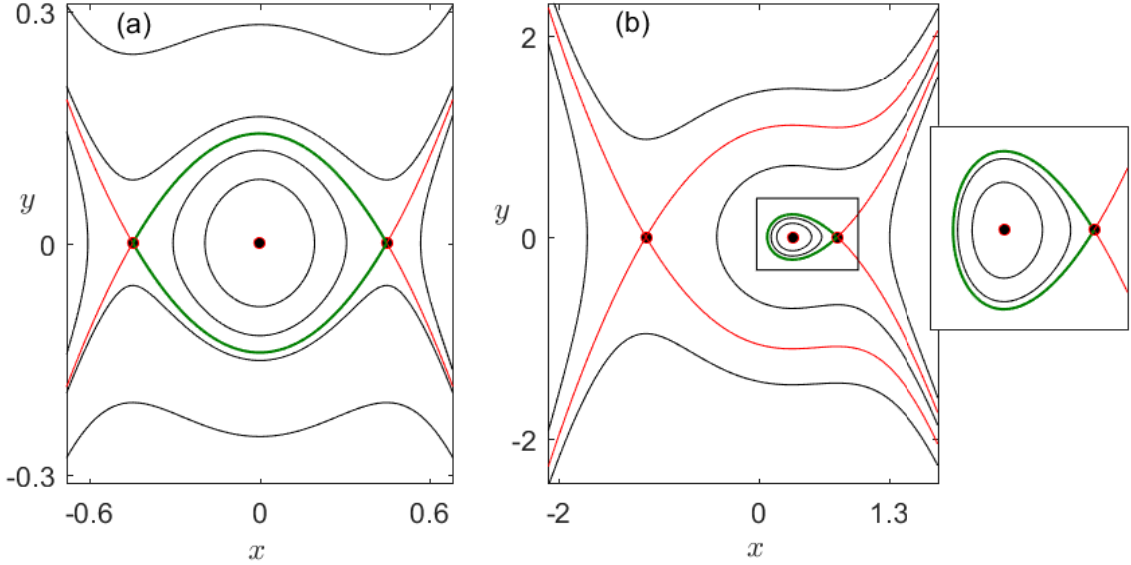}
\end{center}
\caption{(a) Phase portrait of unperturbed Hamiltonian system
\eqref{perturbed-systemA} with $\nu_{2}=0.2$. We show in green the two
heteroclinic orbits, while the non-closing stable and unstable manifolds of
the saddle points are shown in red. (b) Phase portrait of unperturbed
Hamiltonian system \eqref{perturbed-system} with $\nu_{1}=0.3$ and $\nu_{2}%
=1$. We draw in green the homoclinic orbit, the stable and unstable manifolds
of the saddle points are shown in red. }%
\label{Hamiltonian-fig}%
\end{figure}

\begin{proposition}
If we consider perturbed Hamiltonian system \eqref{perturbed-systemA} and
$\overline{\nu}=(\nu_{1},\nu_{2},\nu_{3})$ with $\nu_{2}>0$ and $\nu_{2}%
^{2}/4<\nu_{2}^{3}/27$ (see Lemma \ref{equiCubi}(c)) then the Melnikov
function along of the heteroclinic connection $\Gamma_{\pm}$ is given by%
\begin{equation}
M_{ht}(\overline{\nu})=\frac{2}{15}\sqrt{\nu_{2}}\left(  15\nu_{1}+5\sqrt
{2}\nu_{2}\nu_{3}-3\sqrt{2}\nu_{2}^{2}\right)  . \label{melnikovF1}%
\end{equation}

\end{proposition}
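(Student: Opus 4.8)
The plan is to compute the first-order Melnikov function by the standard formula $M_{ht}(\overline{\nu})=\int_{-\infty}^{\infty} f\wedge g\,dt$ evaluated along the known heteroclinic orbit $\Gamma_{+}$. Writing system \eqref{perturbed-systemA} as $\dot{\mathbf{x}}=F(\mathbf{x})+\varepsilon G(\mathbf{x})$ with unperturbed field $F=(y,\,-\nu_{2}x+x^{3})$ and perturbation $G=(0,\,\nu_{1}+\nu_{3}y-3x^{2}y)$, the Melnikov integrand is the wedge product $F\wedge G = y\cdot(\nu_{1}+\nu_{3}y-3x^{2}y)$, since the first component of $G$ vanishes. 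Thus the whole computation reduces to
\[
M_{ht}(\overline{\nu})=\int_{-\infty}^{\infty} y(t)\bigl(\nu_{1}+\nu_{3}\,y(t)-3x(t)^{2}y(t)\bigr)\,dt,
\]
with $x(t)$ and $y(t)$ given explicitly by the parametrization \eqref{param-hetero}.

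First I would split the integral into three pieces according to the three terms $\nu_{1}y$, $\nu_{3}y^{2}$, and $-3x^{2}y^{2}$, so that the answer is a linear combination $\nu_{1}I_{0}+\nu_{3}I_{1}-3I_{2}$ of three explicit integrals. Substituting \eqref{param-hetero} and making the change of variable $u=\sqrt{\nu_{2}/2}\,t$ turns each $I_{k}$ into a constant times a power of $\nu_{2}$ multiplying a pure $\operatorname{sech}$/$\tanh$ integral in $u$. Concretely, $I_{0}=\tfrac{\nu_{2}}{\sqrt{2}}\int\operatorname{sech}^{2}(\sqrt{\nu_{2}/2}\,t)\,dt$, which after the substitution is proportional to $\int_{-\infty}^{\infty}\operatorname{sech}^{2}u\,du=2$; the integrals $I_{1}$ and $I_{2}$ involve $\operatorname{sech}^{4}u$ and $\tanh^{2}u\,\operatorname{sech}^{4}u$, whose values are $\tfrac{4}{3}$ and $\tfrac{4}{15}$ respectively (the latter from $\int \tanh^{2}u\,\operatorname{sech}^{4}u\,du = \int(\operatorname{sech}^{4}u-\operatorname{sech}^{6}u)\,du = \tfrac{4}{3}-\tfrac{16}{15}$). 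Collecting the prefactors in $\nu_{2}$ and assembling the three contributions should yield the stated closed form \eqref{melnikovF1}.

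The main obstacle I anticipate is bookkeeping rather than conceptual: each term carries a different power of $\nu_{2}$ from the $x,y$ amplitudes and a different power of $\sqrt{\nu_{2}/2}$ from the Jacobian of the time substitution, so the most error-prone step is tracking these prefactors and the factors of $\sqrt{2}$ so that they combine into the common factor $\tfrac{2}{15}\sqrt{\nu_{2}}$ with the correct coefficients $15\nu_{1}$, $5\sqrt{2}\,\nu_{2}\nu_{3}$, and $-3\sqrt{2}\,\nu_{2}^{2}$ inside the parenthesis. I would verify the powers of $\nu_{2}$ first by dimensional/scaling consistency: since $x\sim\nu_{2}^{1/2}$, $y\sim\nu_{2}$, and $dt\sim\nu_{2}^{-1/2}$, the three integrands scale as $\nu_{2}^{1/2}$ (term $\nu_{1}y$), $\nu_{2}^{3/2}$ (term $\nu_{3}y^{2}$), and $\nu_{2}^{5/2}$ (term $3x^{2}y^{2}$), matching the $\sqrt{\nu_{2}}$, $\nu_{2}^{3/2}$, $\nu_{2}^{5/2}$ structure of \eqref{melnikovF1}.

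Once $M_{ht}$ is obtained, I would note that setting $M_{ht}(\overline{\nu})=0$ gives the leading-order condition for persistence of the heteroclinic connection under the perturbation, which via the scaling \eqref{para-perturbedA} translates into the approximate bifurcation curve for heteroclinic connections of the original system \eqref{forma-canonica-1A}. The simplicity of this zero set — it is affine in $\nu_{1}$ and $\nu_{3}$ for fixed $\nu_{2}$ — is exactly what makes the subsequent analytical description of the bifurcation set tractable.
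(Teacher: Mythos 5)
Your proposal is correct and follows essentially the same route as the paper: write \eqref{perturbed-systemA} as $f+\varepsilon g$, note $f\wedge g = y\left(\nu_{1}+\nu_{3}y-3x^{2}y\right)$, and integrate along the parametrized connection \eqref{param-hetero}; your evaluation of the three $\operatorname{sech}$/$\tanh$ integrals ($2$, $4/3$, $4/15$) and the $\nu_{2}$-power bookkeeping indeed reproduce \eqref{melnikovF1}, which is exactly the "direct computation" the paper leaves implicit.
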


\begin{proof}
The system can be written as
\[
(\dot{x},\dot{y})^{T}=f(x,y)+\varepsilon g(x,y),
\]
where $f(x,y)=(y,-\nu_{2}x+x^{3})^{T}$ and $g(x,y)=(0,\nu_{1}+\nu_{3}%
y-3x^{2}y)^{T}.$ Thus, we have $f\wedge g=y\left(  \nu_{1}+\nu_{3}%
y-3x^{2}y\right)  $. Accordingly, the Melnikov function is defined by
\begin{align*}
M_{ht}(\overline{\nu})  &  =\int_{-\infty}^{\infty}f(x(t),\pm y(t))\wedge
g(x(t),\pm y(t))dt=\\
&  =\int_{-\infty}^{\infty}\pm y(t)\left[  \nu_{1}\pm(\nu_{3}-3x^{2}%
(t))y(t)\right]  dt,
\end{align*}
where $x(t)$ and $y(t)$ are defined as in \eqref{param-hetero}. After a direct
computation we obtain \eqref{melnikovF1}.
\end{proof}

By using the Melnikov theory and by fixing one parameter of system
\eqref{forma-canonica-1A}, we can give an approximation of the heteroclinic
connection curves in the remaining parameters plane.

\begin{proposition}
Consider system \eqref{forma-canonica-1A} with $\mu_{3}>0$ sufficiently small
and the parametric plane $(\mu_{2},\mu_{1})$. Then the system has a unique
hyperbolic heteroclinic connection in a neighborhood of the curve
\begin{equation}
\varphi_{ht}=\{(\mu_{2},\mu_{1})\in\mathbb{R}^{2}:\mu_{1}=\pm\frac{\sqrt{2}%
}{15}\mu_{2}\left(  3\mu_{2}+5\mu_{3}\right)  ,\quad\mu_{2}\neq-5\mu_{3}/3\}.
\label{curve-ht}%
\end{equation}

\end{proposition}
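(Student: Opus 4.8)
The plan is to turn the first-order Melnikov computation \eqref{melnikovF1} into an assertion about the original parameters by undoing the blow-up scaling \eqref{para-perturbedA}, and to extract uniqueness and hyperbolicity from the observation that $M_{ht}$ is affine in $\nu_1$. First I would invoke the standard Melnikov persistence theorem for saddle connections: for the autonomous perturbation $(\dot x,\dot y)^T=f+\varepsilon g$ of system \eqref{perturbed-systemA}, the signed distance between the perturbed unstable and stable manifolds of the two outer equilibria, measured normal to the unperturbed connection $\Gamma_+$ of \eqref{param-hetero}, equals $\varepsilon M_{ht}(\overline\nu)/\|f\|+O(\varepsilon^2)$. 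Hence a simple zero of $M_{ht}$ in parameter space yields, for all sufficiently small $\varepsilon>0$, a genuine connection lying $O(\varepsilon)$-close to $\Gamma_+$. Because $\mathbf x_L$ and $\mathbf x_R$ are hyperbolic saddles by Lemma \ref{equiCubi}(c), the surviving orbit is a transversally unfolding saddle--saddle connection, that is, the hyperbolic heteroclinic connection claimed.

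Next I would analyse the zero set. Since $\nu_2>0$, the equation $M_{ht}(\overline\nu)=0$ reduces to $15\nu_1+5\sqrt2\,\nu_2\nu_3-3\sqrt2\,\nu_2^2=0$, that is $\nu_1=\tfrac{\sqrt2}{15}\nu_2(3\nu_2-5\nu_3)$. The decisive nondegeneracy is $\partial M_{ht}/\partial\nu_1=2\sqrt{\nu_2}\neq0$, so the zero is simple; by the implicit function theorem this simultaneously guarantees that exactly one connection persists near $\Gamma_+$ (the uniqueness) and that the bifurcation locus is a smooth curve lying $O(\varepsilon^2)$-close to $\{M_{ht}=0\}$, which is the meaning of the phrase ``neighborhood of the curve''.

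I would then substitute $\nu_1=\mu_1/\varepsilon^4$, $\nu_2=-\mu_2/\varepsilon^2$, $\nu_3=\mu_3/\varepsilon^2$ from \eqref{para-perturbedA}; every power of $\varepsilon$ cancels and $\nu_1=\tfrac{\sqrt2}{15}\nu_2(3\nu_2-5\nu_3)$ becomes the upper branch $\mu_1=\tfrac{\sqrt2}{15}\mu_2(3\mu_2+5\mu_3)$ of \eqref{curve-ht}. The lower branch follows at once from the symmetry $(x,y,\mu_1,\mu_2,\mu_3)\mapsto(-x,-y,-\mu_1,\mu_2,\mu_3)$ of Section \ref{sec:2}, which sends a connection at $(\mu_2,\mu_1)$ to one at $(\mu_2,-\mu_1)$; equivalently, repeating the computation along $\Gamma_-$ flips the sign of the $\nu_1$-term. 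Finally I would justify the exclusion $\mu_2\neq-5\mu_3/3$: the two branches meet precisely when $\mu_1=0$ and $3\mu_2+5\mu_3=0$, where $\nu_1=0$ renders the full field $(x,y)\mapsto(-x,-y)$ equivariant, so that $\Gamma_+$ and $\Gamma_-$ persist together and the connection is no longer unique there.

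The hard part will be the scaling bookkeeping rather than any algebra. Reading ``$\mu_3>0$ sufficiently small'' as ``$\varepsilon$ small'' requires a consistent choice, for instance fixing a reference $\nu_3$ and setting $\varepsilon=\sqrt{\mu_3/\nu_3}$, under which the blown-up parameters $\nu_1,\nu_2$ stay in a fixed compact set as $(\mu_2,\mu_1)$ sweep the relevant arc of the curve; only then is the $O(\varepsilon^2)$ Melnikov remainder uniformly negligible. One must check at the same time that $27\mu_1^2+4\mu_2^3<0$ throughout this arc, so that the two saddles and the unperturbed heteroclinic pair of Figure \ref{Hamiltonian-fig}(a) genuinely exist; since $\mu_2=O(\mu_3)$ and $\mu_1=O(\mu_3^2)$ on the curve, this inequality does hold for $\mu_3$ small. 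Establishing this uniform control is the crux of the argument.
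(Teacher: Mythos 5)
Your proposal is correct and takes essentially the same route as the paper: set the Melnikov function \eqref{melnikovF1} to zero and undo the scaling \eqref{para-perturbedA} to recover \eqref{curve-ht}, with the second branch obtained from the symmetry $(x,y,\mu_1,\mu_2,\mu_3)\mapsto(-x,-y,-\mu_1,\mu_2,\mu_3)$. The paper's proof simply fixes $\nu_3=1$ (so $\varepsilon=\sqrt{\mu_3}$) and performs this substitution, leaving implicit the points you spell out, namely the persistence argument via the simple zero $\partial M_{ht}/\partial\nu_1=2\sqrt{\nu_2}\neq 0$, the verification that $27\mu_1^2+4\mu_2^3<0$ along the curve, and the exclusion of the double-heteroclinic point $\mu_2=-5\mu_3/3$.
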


\begin{proof}
Fixing $\nu_{3}=1$ in the Melnikov function given in \eqref{melnikovF1}, and
imposing the condition $M_{ht}(\nu_{1},\nu_{2})=0,$ we obtain
\[
\nu_{1}=\frac{\sqrt{2}}{15}\nu_{2}\left(  3\nu_{2}-5\right)  .
\]
From \eqref{para-perturbedA} we get $\varepsilon=\sqrt{\mu_{3}}$, $\mu_{1}%
=\mu_{3}^{2}\nu_{1},$and $\mu_{2}=-\mu_{3}\nu_{2},$ so that\
\[
\mu_{1}=\mu_{3}^{2}\nu_{1}=-\mu_{3}^{2}\frac{\sqrt{2}}{15}\frac{\mu_{2}}%
{\mu_{3}}\left(  3\left(  -\frac{\mu_{2}}{\mu_{3}}\right)  -5\right)  ,
\]
and the conclusion follows.
\end{proof}

When $\mu_{1}=0$ and $\mu_{3}>0$ on the parameter plane $(\mu_{2},\mu_{1}),$
we obtain the point of double heteroclinic connections
\begin{equation}
DHT\equiv\left(  -5\mu_{3}/3,0\right)  . \label{dhtpoint}%
\end{equation}

We recall that Schecter's points are co-dimension two points defined by the
intersection of a saddle-node curve and a homoclinic or heteroclinic curve,
for more details see \cite{schecter}. Taking the intersection points of the
saddle-node bifurcation curve and the heteroclinic curves given in
\eqref{curveSN} and \eqref{curve-ht} respectively, we obtain a first-order
approximation of Schecter's points of the system. Since the system is
symmetric with respect to the parameter $\mu_{1},$ the system has four
Schecter's points (see Figure \ref{bifurcation-sets}), these points are
\begin{equation}%
\begin{split}
S_{1}^{\pm}  &  \equiv\rho_{1}\left(  (5/27),\mp(5\sqrt{10}/729)\left(
\sqrt{18\mu_{3}+5}+\sqrt{5}\right)  \right)  ,\\
S_{2}^{\pm}  &  \equiv\rho_{2}\left(  (5/27),\pm(5\sqrt{10}/729)\left(
\sqrt{18\mu_{3}+5}-\sqrt{5}\right)  \right)  ,
\end{split}
\label{schecter}%
\end{equation}
where
\[
\rho_{1}=\left(  9\mu_{3}+5-\sqrt{5}\sqrt{18\mu_{3}+5}\right)  ,\quad\rho
_{2}=\left(  \sqrt{5}\sqrt{18\mu_{3}+5}-9\mu_{3}-5\right)  .
\]

Now, by using the homoclinic connection of Hamiltonian system
\eqref{Hamiltonian}, we compute the associated Melnikov function for system
\eqref{forma-canonica-1A} when $\nu_{3}=1$.

\begin{proposition}
If we consider system \eqref{perturbed-system} and $\overline{\nu}=(\nu
_{1},\nu_{2},\nu_{3})$ with $\nu_{1}>0$, $\nu_{2}>0$ and $\nu_{3}=1$, then the
Melnikov function associated to the homoclinic orbit with connection point
$(0,s_{R})$, it is given by%
\begin{equation}
M(\overline{\nu})=\sqrt{2}\frac{\cosh^{2}(\theta)}{\cosh^{2}(\theta)+2}\left(
F_{1}(\theta)+\nu_{2}F_{2}(\theta)\right)  , \label{melniF2B}%
\end{equation}
where
\begin{equation}%
\begin{split}
F_{1}(\theta)=  &  720\theta-320\sinh\theta+240\theta\cosh^{3}\theta
-320\cosh^{2}\theta\sinh\theta-\\
&  -80\cosh^{4}\theta\sinh\theta+480\theta\cosh\theta,\\
F_{2}(\theta)=  &  1440\theta\cosh\theta-768\sinh\theta-\cosh^{3}%
\theta-1344\cosh^{2}\theta\sinh\theta-\\
&  -48\cosh^{4}\theta\sinh\theta
\end{split}
\label{melniF2B-2}%
\end{equation}

and $0<\theta<\infty$, with
\[
\cosh\theta=\frac{2s}{\omega},\quad\omega^{2}=2(\nu_{2}-s_{R}^{2})>0,\quad
\nu_{1}=\nu_{2}s_{R}-s_{R}^{3},
\]
being $s_{R}$ the biggest positive root of the equation $\nu_{1}-\nu
_{2}x+x^{3}=0$, see Figure \ref{Homoclina}.
\end{proposition}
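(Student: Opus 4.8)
The plan is to compute the Melnikov integral directly along the homoclinic orbit of the unperturbed Hamiltonian system $H_2$, following exactly the template already used in the heteroclinic case. First I would recall that, writing the perturbed system \eqref{perturbed-system} as $(\dot{x},\dot{y})^{T}=f(x,y)+\varepsilon g(x,y)$ with $f(x,y)=(y,\nu_{1}-\nu_{2}x+x^{3})^{T}$ and $g(x,y)=(0,\nu_{3}y-3x^{2}y)^{T}$, the wedge product is $f\wedge g = y\,(\nu_{3}-3x^{2})\,y = y^{2}(\nu_{3}-3x^{2})$. With $\nu_{3}=1$ fixed, the Melnikov function becomes
\[
M(\overline{\nu})=\int_{-\infty}^{\infty} y(t)^{2}\bigl(1-3x(t)^{2}\bigr)\,dt,
\]
evaluated along the homoclinic solution $(x(t),y(t))$ of the level set $H_2(x,y)=H_2(0,s_R)$ based at the saddle. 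The main labor is therefore to produce an explicit parametrization of this homoclinic orbit and then to carry out the integration.

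The key step is constructing the homoclinic loop. Since $H_2$ is a quartic Hamiltonian, I would exploit energy conservation: along the orbit $y^{2}=2\bigl(H_2(0,s_R)-\nu_2 x^{2}/2+\nu_1 x+x^{4}/4\bigr)$, so the right-hand side is a quartic in $x$ that must factor as a perfect-square-times-quadratic form dictated by the fact that the loop is homoclinic to the saddle $s_R$ (a double root at $x=s_R$). Writing $y^{2}=\tfrac12(x-s_R)^{2}(c-x)(x-d)$ for appropriate roots and using the constraints $\nu_1=\nu_2 s_R-s_R^{3}$ and $\omega^{2}=2(\nu_2-s_R^{2})$ announced in the statement, I expect the quadratic factor to be expressible through $\omega$. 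I would then introduce the substitution suggested by the stated relation $\cosh\theta=2s/\omega$ (with $s=s(x)$ an affine shift of $x$ relative to $s_R$), which rationalizes the square root and turns $dt=dx/y$ into a clean integrand in $\theta$. The prefactor $\sqrt{2}\cosh^{2}\theta/(\cosh^{2}\theta+2)$ appearing in \eqref{melniF2B} strongly suggests that after this substitution $x$, $y$ and $dt$ all acquire denominators of the form $\cosh^{2}\theta+2$, and that the two groupings $F_1$ and $F_2$ simply collect the terms proportional to $1$ and to $\nu_2$ respectively in the integrand.

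With the orbit parametrized, I would split $M$ according to the explicit dependence on $\nu_2$: the constant-in-$\nu_2$ contribution gives $F_1(\theta)$ and the linear-in-$\nu_2$ contribution gives $\nu_2 F_2(\theta)$. Each piece is a finite sum of integrals of the type $\int \cosh^{k}\theta\,\sinh^{m}\theta\,d\theta$ and $\int \theta\cosh^{k}\theta\,d\theta$, all elementary via reduction formulas or integration by parts; the appearance of the linear-in-$\theta$ terms ($720\theta$, $240\theta\cosh^{3}\theta$, $1440\theta\cosh\theta$, etc.) is precisely the signature of integrating $\theta$ against powers of $\cosh\theta$. Finally I would evaluate at the endpoints, noting that the homoclinic orbit corresponds to $\theta$ ranging over $(0,\infty)$ (the saddle is approached as $\theta\to\infty$), and confirm that the boundary contributions vanish because $y\to0$ exponentially at the saddle.

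The hard part will be the bookkeeping in the change of variables: expressing $x$, $x^{2}$, $y^{2}$ and the time differential $dt$ consistently in terms of $\theta$ and $\omega$, keeping track of the shift between $x$ and $s_R$, and verifying that all $\omega$-dependence collapses into the stated prefactor so that $F_1$ and $F_2$ depend on $\theta$ alone. Provided the quartic factors as anticipated and the substitution $\cosh\theta=2s/\omega$ indeed rationalizes the orbit, the remaining integrations are routine, and matching the result to \eqref{melniF2B}–\eqref{melniF2B-2} is then a direct—if tedious—verification.
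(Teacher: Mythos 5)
Your overall strategy coincides with the paper's up to one cosmetic difference: you compute $M=\int_{-\infty}^{\infty}y^{2}(1-3x^{2})\,dt$ directly and convert with $dt=dx/y$, while the paper first applies Green's theorem to write $M$ as an area integral of $\pm(3x^{2}-1)$ over the region bounded by the loop and then iterates; both collapse to the same one-dimensional integral $\pm 2\int_{\overline{x}}^{s_{R}}(1-3x^{2})\,y_{s}^{+}(x)\,dx$ (the sign being a matter of orientation convention, irrelevant for the zero set). Your factorization of the energy relation as a perfect square times a quadratic, $y^{2}=\frac{1}{2}(x-s_{R})^{2}\left[(x+s_{R})^{2}-\omega^{2}\right]$ with $\omega^{2}=2(\nu_{2}-s_{R}^{2})$, and the hyperbolic substitution $x+s_{R}=\omega\cosh\theta$ are exactly the paper's steps, and the remaining integrals are elementary, as you say.

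There is, however, a genuine flaw in your final paragraph which, followed literally, makes the computation fail. Under the substitution $x+s_{R}=\omega\cosh\theta$, the turning point $\overline{x}$ (where $\overline{x}+s_{R}=\omega$) corresponds to $\theta=0$, and the saddle $x=s_{R}$ corresponds to the \emph{finite} value $\theta_{s_{R}}$ determined by $\cosh\theta_{s_{R}}=2s_{R}/\omega$ --- not to $\theta\to\infty$. The integration interval is the compact interval $[0,\theta_{s_{R}}]$; your integrand grows like $\cosh^{3}\theta\sinh^{2}\theta$, so integrating it over $(0,\infty)$ diverges. Likewise there are no ``boundary contributions that vanish because $y\to0$ exponentially at the saddle'': the exponential approach happens in the time variable $t$, which you abandoned once you passed to the $x$-integral, and the evaluation of the antiderivatives at the finite upper endpoint $\theta_{s_{R}}$ is precisely what produces $F_{1}(\theta)+\nu_{2}F_{2}(\theta)$ in \eqref{melniF2B}. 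This also exposes a structural point your plan misses: if $\theta$ were an integration variable swept out to infinity, $M$ could not depend on $\theta$ at all, whereas in the proposition $\theta$ (that is, $\theta_{s_{R}}$) is a parameter labelling which homoclinic loop of the family one is on, tied to $(\nu_{1},\nu_{2})$ through $\cosh\theta=2s_{R}/\omega$ and $\nu_{1}=\nu_{2}s_{R}-s_{R}^{3}$; the range $0<\theta<\infty$ in the statement describes how this label varies along the family (from the Bogdanov--Takens limit $\theta\to0^{+}$ to the double-heteroclinic limit $\theta\to\infty$), not the domain of integration.
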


\begin{proof}
We consider the unperturbed Hamiltonian system given in
\eqref{perturbed-system} with $\nu_{2}>0$. From Lemma \ref{equiCubi}(c), the
system has $3$ equilibrium points $\mathbf{x}_{i}=(s_{i},0)$, where
$\mathbf{x}_{L}$ and $\mathbf{x}_{R}$ are saddle points and $\mathbf{x}_{C}$
is a focus or node and
\[
s_{L}<s_{C}<s_{R},\quad s_{L}+s_{C}+s_{R}=0,\quad s_{L}s_{C}s_{R}=-\nu_{1}.
\]
We study only the case $\nu_{1}>0$, for the case $\nu_{1}<0$ is
analogous.\newline System \eqref{perturbed-system} can written as%
\[
(\dot{x},\dot{y})^{T}=f(x,y)+\varepsilon g(x,y).
\]
Now, assuming \ $\nu_{1}>0,$ by Green's Theorem, the homoclinic Melnikov
function of the system can rewritten as
\[
M_{h}(\overline{\nu})=\int\int_{D(\nu_{1},\nu_{2})}\left(  \frac{-\partial
g(x,y)}{\partial y}\right)  dA,
\]
where $D$ is the region bounded by the homoclinic orbit which joins the
equilibrium point $\left(  s_{R},0\right)  $ to itself. By fixing $\nu_{3}=1$
(that is $\mu_{3}>0$), and taking $\ p(x)=\nu_{1}-\nu_{2}x+x^{3},$ we get
$p(s_{R})=\nu_{1}-\nu_{2}s_{R}+s_{R}^{3}=0,$ that is
\begin{equation}
\nu_{1}=s_{R}(\nu_{2}-s_{R}^{2}), \label{auxilar-cubicA}%
\end{equation}
and so $\nu_{2}-s_{R}^{2}>0.$ Taking the auxiliary function
\[
q(x)=\int_{0}^{x}p(x)dx=\nu_{1}x-\nu_{2}\frac{x^{2}}{2}+\frac{x^{4}}{4},
\]
and using \eqref{Hamiltonian}, the homoclinic loop is given by the points
$\left(  x,y_{s}^{\pm}(x)\right)  $ where $\overline{x}\leq x\leq s_{R}$,
\[
y_{s}^{\pm}(x)=\pm\sqrt{2}\sqrt{q(x)-q(s_{R})},
\]
and $y_{s}^{\pm}(\overline{x})=y_{s}^{\pm}(s_{R})=0,$ see Figure
\ref{Homoclina}. Now, the Melnikov function is thanks to the symmetry of the
loop
\begin{align*}
M_{h}(\overline{\nu})  &  =2\int_{\overline{x}}^{s_{R}}(3x^{2}-1)dx\int
_{0}^{y_{s}^{+}(x)}dy=\\
&  =\sqrt{2}\int_{\overline{x}}^{s}(3x^{2}-1)(s_{R}-x)\sqrt{(x+s_{R}%
)^{2}-2(\nu_{2}-s_{R}^{2})}dx=\\
&  =\sqrt{2}\int_{\overline{x}}^{s}(3x^{2}-1)(s_{R}-x)\sqrt{(x+s_{R}%
)^{2}-\omega^{2}}dx,
\end{align*}
where from \eqref{auxilar-cubicA} $\omega^{2}=2(\nu_{2}-s_{R}^{2}),$ and we
have used that
\[
q(x)-q(s_{R})=\frac{1}{4}(x-s_{R})^{2}\left[  (x+s_{R})^{2}-\omega^{2}\right]
.
\]
Taking the change of variable
\[
x+s_{R}=\omega\cosh\theta,
\]
and noting that $q(s_{R})-q(\overline{x})=0$ we see that $\overline{x}%
+s_{R}=\omega$, which corresponds to $\theta=0,$ while for $x=s_{R}$ the
corresponding values of $\theta=\theta_{s_{R}}$ satisfy $\cosh\theta_{s_{R}%
}=2s_{R}/\omega,$ or also $\omega^{2}\cosh^{2}\theta_{s_{R}}=4s_{R}^{2},$ that
is $(s_{R}^{2}+\nu_{2})\cosh^{2}\theta_{s_{R}}=2s_{R}^{2},$ and so we get
\[
s_{R}^{2}=\frac{\cosh^{2}\theta_{s_{R}}}{2+\cosh^{2}\theta_{s_{R}}}\nu_{2}.
\]
Now we arrived to
\[
M_{h}(\overline{\nu})=\sqrt{2}\omega^{2}\int_{0}^{\theta_{s_{R}}}%
(1-3(\omega\cosh\theta-s_{R})^{2})(2s_{R}-\omega\cosh\theta)\sinh^{2}\theta
d\theta,
\]
and after some computations, we obtain \eqref{melniF2B} and
\eqref{melniF2B-2}, where $\theta_{s_{R}}$ has been simplified to $\theta$.
\end{proof}

\begin{figure}[pth]
\begin{center}
\includegraphics[width=11cm]{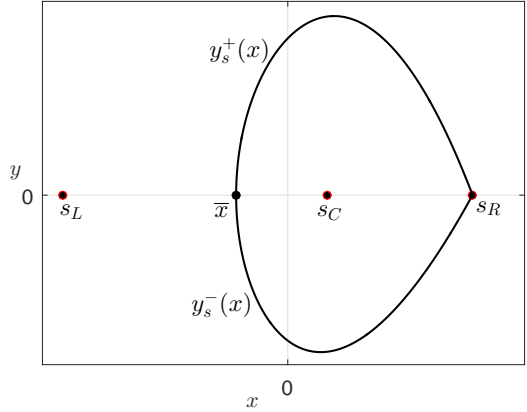}
\end{center}
\caption{ Homoclinic orbit which joins the saddle equilibrium point
$(s_{R},0)$ to itself.}%
\label{Homoclina}%
\end{figure}

As a direct consequence of the above result, we give an analytical
approximation of the bifurcation curves for homoclinic connections of system \eqref{forma-canonica-1A}.

\begin{proposition}
Consider system \eqref{forma-canonica-1A} with $\mu_{3}>0$ sufficiently small
and the parametric plane $(\mu_{2},\mu_{1}).$ Then the system has a unique
homoclinic orbit in a neighborhood of the curve
\begin{equation}
\varphi_{h}=\{(\mu_{2},\mu_{1})\in\mathbb{R}^{2}:\mu_{2}=-\mu_{3}\nu
_{2}(\theta),\quad\mu_{1}=\pm\mu_{3}^{3/2}\nu_{1}(\theta),\quad0<\theta
<\infty\}, \label{curve-h}%
\end{equation}
where
\begin{equation}%
\begin{split}
\nu_{2}\left(  \theta\right)   &  =\frac{10(\cosh2\theta+5)(9\sinh\theta
+\sinh3\theta-12\theta\cosh\theta)}{3(370\sinh\theta+115\sinh3\theta
+\sinh5\theta-60\theta(11\cosh\theta+\cosh3\theta))},\\
\nu_{1}(\theta)  &  =\nu_{2}\left(  \theta\right)  s-s^{3},\quad s^{2}%
=\frac{\cosh^{2}\theta}{2+\cosh^{2}\theta}\nu_{2}(\theta).
\end{split}
\label{parametric-curve}%
\end{equation}
Moreover, for the points $(\mu_{2}(\theta),\mu_{1}(\theta))$ at the curve
$\varphi_{h}$ we have.%
\[
\lim_{\theta\rightarrow0^{+}}(\mu_{2},\mu_{1})=(-\mu_{3},\pm2/3\sqrt{\mu
_{3}^{3}/3}),\quad\lim_{\theta\rightarrow\infty}(\mu_{2},\mu_{1})=\left(
-5\mu_{3}/3,0\right)  .
\]

\end{proposition}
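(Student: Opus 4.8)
The plan is to obtain this proposition as a direct corollary of the homoclinic Melnikov computation of the preceding proposition, the only real work being to solve $M(\overline\nu)=0$ for the bifurcation locus and then to transport the resulting parametric curve from the rescaled $(\nu_1,\nu_2)$ coordinates back to the original $(\mu_2,\mu_1)$ plane through the reparametrization \eqref{para-perturbed}. Since the scalar prefactor $\sqrt2\,\cosh^2\theta/(\cosh^2\theta+2)$ in \eqref{melniF2B} is strictly positive for $0<\theta<\infty$, imposing $M(\overline\nu)=0$ reduces to the linear relation $F_1(\theta)+\nu_2F_2(\theta)=0$ in the single unknown $\nu_2$. The pair $(\theta,\nu_2)$ is a legitimate reparametrization of $(\nu_1,\nu_2)$ for fixed $\nu_3=1$: given $\theta$ and $\nu_2$ one recovers $s=s_R$ from $s^2=\cosh^2\theta\,\nu_2/(2+\cosh^2\theta)$ and then $\nu_1$, so the zero set of $M$ is genuinely a curve parametrized by $\theta$.

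The main algebraic step is to put the solution $\nu_2(\theta)=-F_1(\theta)/F_2(\theta)$ into the closed form displayed in \eqref{parametric-curve}. Here I would expand every power $\cosh^k\theta$ and product $\cosh^k\theta\sinh\theta$ occurring in \eqref{melniF2B-2} via the multiple-angle identities $2\cosh^2\theta=\cosh2\theta+1$, $4\cosh^2\theta\sinh\theta=\sinh3\theta+\sinh\theta$ and their higher-order analogues, collect numerator and denominator as linear combinations of $\sinh k\theta$, $\theta\cosh k\theta$ and $\cosh2\theta$, and cancel the common scalar factor; this yields the quotient for $\nu_2(\theta)$. The companion coordinate then needs no further integration: it follows from the two relations already established in the previous proof, namely $s^2=\cosh^2\theta\,\nu_2(\theta)/(2+\cosh^2\theta)$ with $s=s_R>0$ and $\nu_1=\nu_2 s-s^3$ coming from $p(s_R)=0$, which is exactly $\nu_1(\theta)$. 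Finally the involution $(x,y,\mu_1)\mapsto(-x,-y,-\mu_1)$ leaving \eqref{forma-canonica-1A} invariant produces the mirror branch and hence the sign $\pm$ in \eqref{curve-h}.

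To return to the physical parameters I would set $\nu_3=1$, so that \eqref{para-perturbed} forces $\varepsilon=\sqrt{\mu_3}$, $\mu_2=-\mu_3\nu_2(\theta)$ and $\mu_1=\pm\mu_3^{3/2}\nu_1(\theta)$, which is precisely \eqref{curve-h}. The uniqueness and hyperbolicity of the connection near $\varphi_h$ are not separate computations but standard consequences of homoclinic Melnikov theory: for $\mu_3$ (equivalently $\varepsilon$) small the saddle $\mathbf{x}_R$ stays hyperbolic by Lemma \ref{equiCubi}(c), the function $M$ measures the first-order splitting of its stable and unstable manifolds, and a simple zero of $M$ forces these manifolds to coincide in a single loop that unfolds transversally as the parameters cross $\varphi_h$. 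Simplicity here amounts to $\partial M/\partial\nu_2\neq0$, i.e. $F_2(\theta)\neq0$ on $0<\theta<\infty$, which is automatic since $\nu_2(\theta)$ is finite and positive on the whole range.

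The remaining assertions are the two endpoint limits, and this is where the only genuine difficulty lies. The limit $\theta\to\infty$ is routine exponential asymptotics: the dominant $\cosh^4\theta\sinh\theta$ and $\sinh5\theta$ contributions give $\nu_2\to5/3$, hence $\mu_2\to-5\mu_3/3$, while $s^2\to\nu_2$ makes $\nu_1=\nu_2 s-s^3\to0$ and $\mu_1\to0$, recovering the double heteroclinic point \eqref{dhtpoint}. The delicate case is $\theta\to0^+$: both the numerator and denominator of $\nu_2(\theta)$ vanish, and in fact their Taylor coefficients of orders $\theta$ and $\theta^3$ cancel identically, so one must push the expansions to order $\theta^5$ to extract the true value $\nu_2\to1$; then $s^2\to1/3$, $\nu_1\to2/(3\sqrt3)$, and \eqref{para-perturbed} gives $(\mu_2,\mu_1)\to(-\mu_3,\pm\tfrac{2}{3}\sqrt{\mu_3^3/3})$, exactly the Bogdanov-Takens point $BT_\pm$ of \eqref{BT-points}. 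I expect this fifth-order expansion resolving the $0/0$ indeterminacy at $\theta=0$ to be the main obstacle; once it is in hand, the computation confirms that $\varphi_h$ emanates from $BT_\pm$ and terminates at $DHT$.
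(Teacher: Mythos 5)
Your proposal is correct and follows essentially the same route as the paper: set the Melnikov function \eqref{melniF2B} to zero (the strictly positive prefactor reduces this to $F_{1}+\nu_{2}F_{2}=0$), recover $\nu_{1}$ from the saddle relations $s^{2}=\cosh^{2}\theta\,\nu_{2}/(2+\cosh^{2}\theta)$ and $\nu_{1}=\nu_{2}s-s^{3}$, and pull everything back through \eqref{para-perturbed} with $\nu_{3}=1$, $\varepsilon=\sqrt{\mu_{3}}$. The paper compresses all of this --- including the fifth-order expansions resolving the $0/0$ indeterminacy at $\theta\rightarrow0^{+}$ and the exponential asymptotics giving $\nu_{2}\rightarrow5/3$ at $\theta\rightarrow\infty$ --- into ``after some computations the conclusion follows,'' so your write-up simply supplies, correctly, the details the paper omits.
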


\begin{proof}
The Melnikov function given in \eqref{melniF2B}-\eqref{melniF2B-2} vanishes at
the points $(\nu_{1}(\theta),\nu_{2}(\theta))$ defined in
\eqref{parametric-curve}. Taking $\nu_{3}=1$ in \eqref{para-perturbed} we
obtain $\mu_{1}=\mu_{3}^{3/2}\nu_{1}$ and $\mu_{2}=-\mu_{3}\nu_{2},$ and after
some computations the conclusion follows.
\end{proof}

\begin{remark}
Note that from the previous result we obtain the two points
\[
\lim_{\theta\rightarrow0^{+}}(\mu_{2},\mu_{1})\equiv BT,\quad\lim
_{\theta\rightarrow\infty}(\mu_{2},\mu_{1})\equiv DHT,
\]
where the points $BT$ and $DHT$ are given in \eqref{BT-points}\ and
\eqref{dhtpoint} respectively.
\end{remark}

In Figure \ref{bifurcation-sets}, the complete bifurcation set of system
\eqref{forma-canonica-1A} is shown. Figures \ref{limit-cycle} and
\ref{no-limit-cycle} give the different phase portrait in the labeled
parameter regions, where in these figures we show the different configurations
of the phase portrait of the system. In Figure \ref{limit-cycle}, since the
homoclinic Melnikov function is positive, we can guarantee that there is no
change on the relative position of the stable and unstable manifolds of each
saddle points.

\begin{figure}[pth]
\begin{center}
\includegraphics[width=15cm]{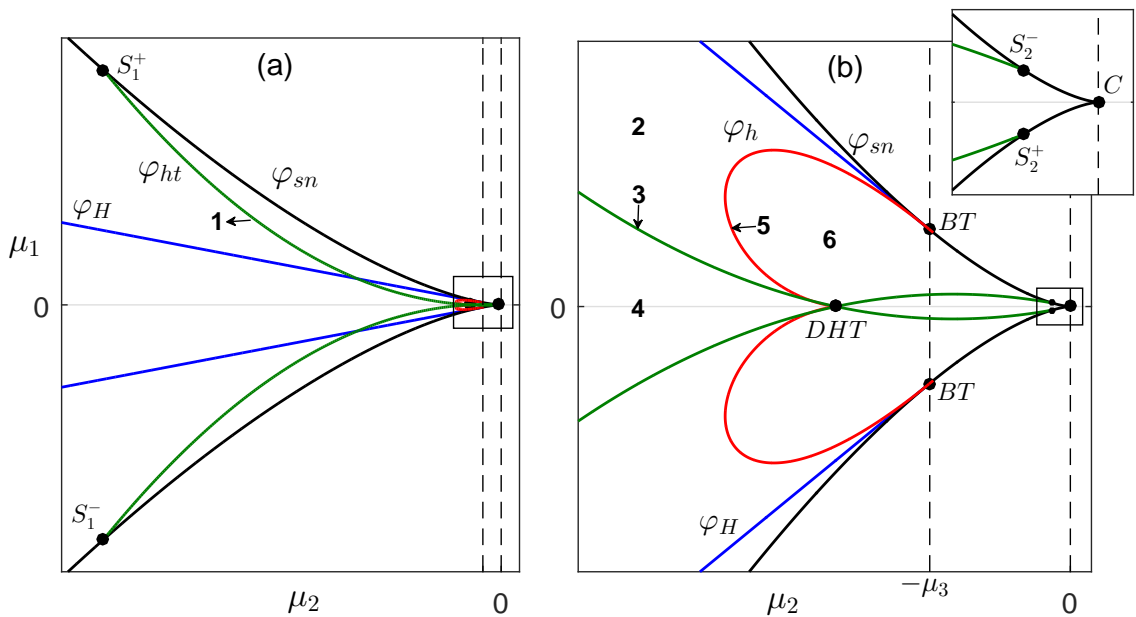}
\end{center}
\caption{The bifurcation diagram of system \eqref{forma-canonica-1A}, taking
$\mu_{3}>0$ sufficiently small.}%
\label{bifurcation-sets}%
\end{figure}

\begin{figure}[pth]
\begin{center}
\includegraphics[width=14cm]{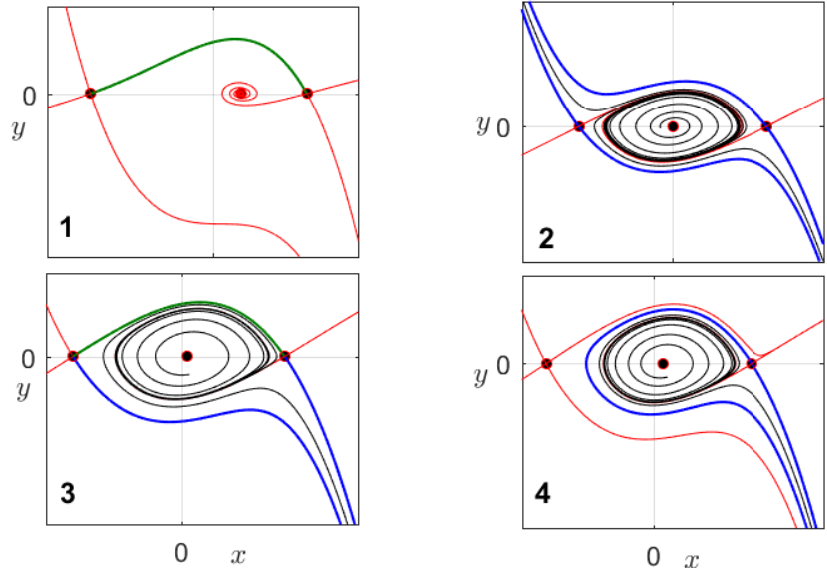}
\end{center}
\caption{Phase portrait of system \eqref{forma-canonica-1A} in the parameter
regions labeled with \textbf{1},\textbf{2}, \textbf{3} and \textbf{4} in
Figure \ref{bifurcation-sets}. The thick lines are the boundary of the basin
of attraction of a limit cycle, such boundary is formed by some stable and
unstable manifolds of the saddle points. The green lines are the heteroclinic
connections. }%
\label{limit-cycle}%
\end{figure}

\begin{figure}[pth]
\begin{center}
\includegraphics[width=12cm]{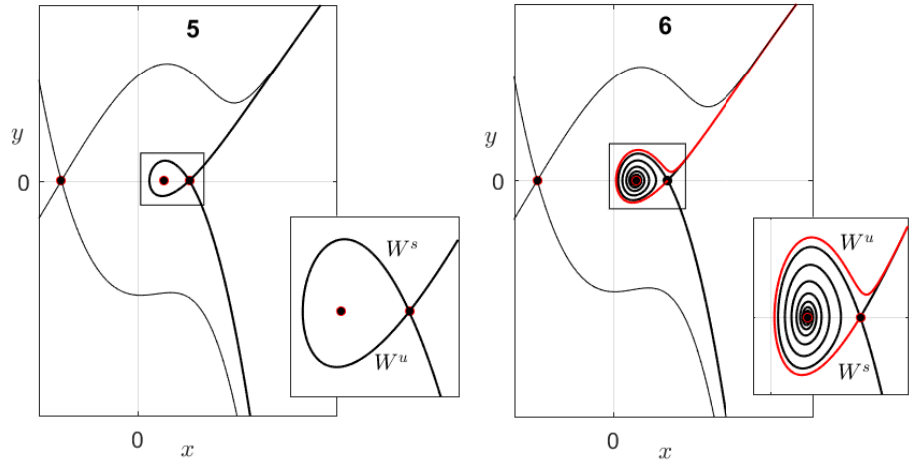}
\end{center}
\caption{Phase portrait of system \eqref{forma-canonica-1A} in the parameter
regions labeled with \textbf{5} and \textbf{6} in Figure
\ref{bifurcation-sets}. The stable and unstable manifolds of the saddle
points.}%
\label{no-limit-cycle}%
\end{figure}

\begin{remark}
\label{existence}Note that considering the function $\nu_{2}$ defined in
\eqref{parametric-curve}, and after some algebra, we obtain that finding the
minimum of the function $\nu_{2}$ is equivalent to finding the zeros of the
function
\begin{equation}
h_{1}\left(  x\right)  =2x\left(  26\cosh2x+\cosh4x+33\right)  -5\left(
10\sinh2x+\sinh4x\right)  , \label{fun-h}%
\end{equation}
where $h_{1}\left(  0\right)  =0,$ $h_{1}\left(  1\right)  <0$ and
$h_{1}\left(  2\right)  >0,$ see Figure \ref{estudio-homoclina}(c). Thus at
$\theta^{\ast}\approx1.8630981$ the function $\nu_{2}$ has a minimum given by
$\nu_{2}(\theta^{\ast})\approx2.454887$ (see Figure \ref{estudio-homoclina}%
(b)), so that%
\[
-\frac{5}{2}\mu_{3}<-\nu_{2}(\theta^{\ast})\mu_{3}<-\frac{5}{3}\mu_{3}.
\]
Now, if we consider system \eqref{forma-canonica-1A} with $\mu_{3}>0$
sufficiently small, $\mu_{2}<-(5/2)\mu_{3}$ and $\mu_{1}$ such that
\[
|\mu_{1}|<\left(  \frac{\mu_{3}}{3}\right)  ^{3/2}-\left(  \frac{\mu_{3}}%
{3}\right)  ^{1/2}\mu_{2},
\]
then the system has a stable limit cycle, see Figure \ref{estudio-homoclina}%
(a). This assertion is a direct consequence of \eqref{curveH} and
Poincar\'{e}-Bendixson Theorem (see for instance \cite{wiggins2003}), since
the sign of the Melnikov function guarantees the existence of a compact
positive invariant set with only one unstable equilibrium point in its interior.
\end{remark}

\begin{figure}[pth]
\begin{center}
\includegraphics[width=13cm]{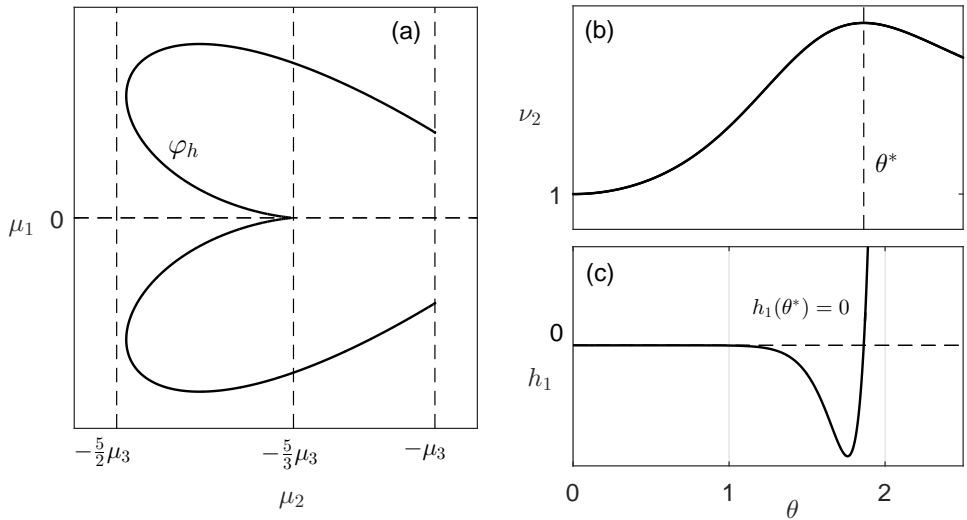}
\end{center}
\caption{(a) The parametric curve defined in \eqref{curve-h} on the plane of
parameters $(\mu_{2},\mu_{1})$. (b) The function $\nu_{2}(\theta)$ defined in
\eqref{parametric-curve}. (c) The function $h(\theta)$ defined in
\eqref{fun-h}.}%
\label{estudio-homoclina}%
\end{figure}

Just to illustrate the quality of the above analytical predictions for the
homoclinic connection bifurcation curve, by using the shooting method (see for
instance \cite{rodriguez1990}) and taking $\mu_{3}=0.1$, we show in Figure
\ref{comparacion}, the numerical continuation curve for the homoclinic orbit
of system \eqref{forma-canonica-1A}, and in red the analytic approximation
curve given by \eqref{curve-h}. As observed, there is a great similarity
between the two approaches, and in general we can conclude that the above
analytical predictions are really useful is getting a global view of the
actual bifurcation set.

\begin{figure}[pth]
\begin{center}
\includegraphics[width=13cm]{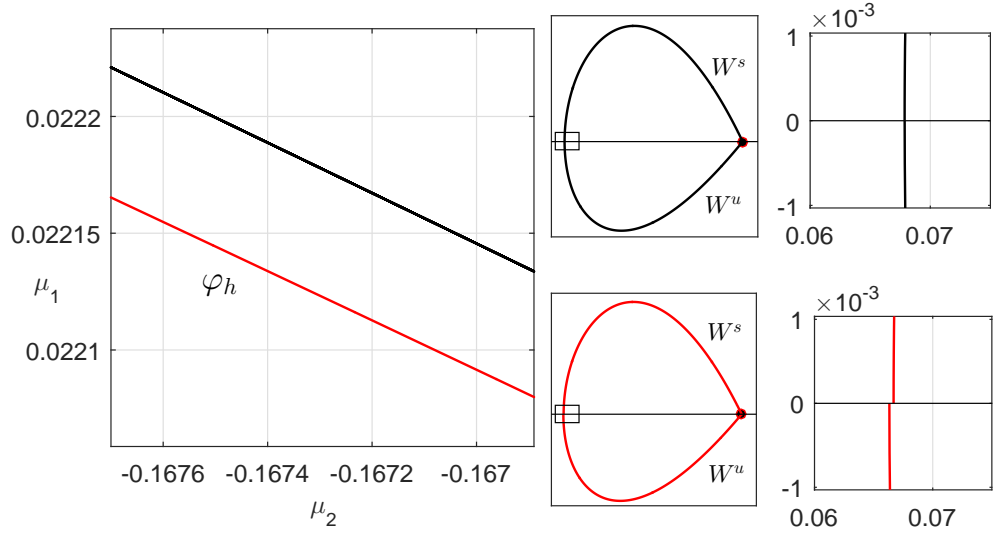}
\end{center}
\caption{System \eqref{forma-canonica-1A} with $\mu_{3}=0.1$. In black (left
panel), the numerical continuation curve in the parameter plane $(\mu_{2}%
,\mu_{1})$, and by using points of the curve, in black (right panel) the
numerical computation of the stable and unstable manifold for a saddle point
of the system . In red (left panel), the analytic approximation curve given by
\eqref{curve-h}, and by using points of the curve, in red (right panel) the
numerical computation of the stable and unstable manifolds for a saddle point
of the system.}%
\label{comparacion}%
\end{figure}

%%%%%%%%%%%%%%%%%%%%%%%%%%%%%%%%%%%%%%%%%%%%%%%%%%%%%%%%%%%%%%%%%%%%%%%%%%%%%%%%%%%%%%%%%%%%%%%%%%

\section{Application to 3D Canonical Memristor Oscillator}

\label{sec:4}

As one of the possible applications of the above study, in this section we will
show the existence of a topological sphere completely foliated by periodic
orbits for a 3D canonical memristor oscillator, when the flux-charge
characteristics of the memristor is a monotone cubic polynomial. The existence
of this sphere was reported numerically in \cite{messias2010,Korneev2017}.

We start by considering the modeling of an elementary oscillator endowed with
one flux-controlled memristor $M$, see Figure \ref{fig:mem3D} and
\cite{chua2008}. In the shown circuit the values of $L$ and $C$ for the
impedance and capacitance are positive constants, while the resistor has a
negative value $-R$. From Kirchoff's laws we see that
\[%
\begin{array}
[c]{rcl}%
i_{R}(\tau)-i_{L}(\tau) & = & 0,\\
i_{L}(\tau)-i_{C}(\tau)-i_{M}(\tau) & = & 0,\\
-v_{R}(\tau)+v_{L}(\tau)+v_{C}(\tau) & = & 0,\\
v_{C}(\tau)-v_{M}(\tau) & = & 0,
\end{array}
\]
where $v,i$ stand for the voltage and current, respectively, across the
corresponding element of the circuit as indicated by the subscript.

\begin{figure}[pth]
\begin{center}
\includegraphics[width=9cm]{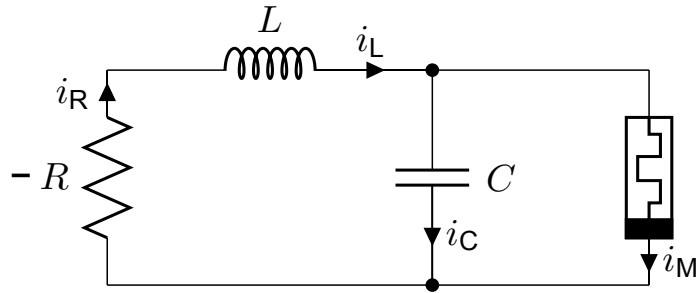}
\end{center}
\caption{The canonical memristor oscillator \cite{chua2008}. Note that the the
only active element in the circuit is the resistor with a negative resistance
$-R$.}%
\label{fig:mem3D}%
\end{figure}In Section 3.2 of \cite{chua2008}, this circuit is proposed as a
third-order canonical memristor oscillator but the notation is slightly
different as follows. They take $i_{1}=i_{C}$, $i_{3}=i_{L}=i_{R}$, $i=i_{M}$,
$v_{1}=v_{C}=v_{M}$, $v_{3}=v_{L}$, $v_{4}=v_{R}$, $\varphi_{1}=\varphi_{C}$,
$\varphi_{3}=\varphi_{L}$, $\varphi_{4}=\varphi_{R}$ and $\varphi=\varphi_{M}%
$. Thus, they write the two equations
\[
i_{1}=i_{3}-i,\quad v_{3}=v_{4}-v_{1},
\]
and, after integrating respect to time, they arrive to
\begin{equation}
q_{1}=q_{3}-q(\varphi),\quad\varphi_{3}=\varphi_{4}-\varphi_{1},\label{eq:2}%
\end{equation}
where $q(\varphi)$ stands for the nonlinear flux-charge characteristics of the
flux-controlled memristor. Solving now for $(q_{3},\varphi_{4})$ and taking
into account that $\varphi_{1}=\varphi$ since $v_{1}=v_{M}$, it is immediate
to obtain
\[
q_{3}=q_{1}+q(\varphi),\quad\varphi_{4}=\varphi+\varphi_{3},
\]
so that authors conclude that a good choice for independent variables is the
triple $(q_{1},\varphi_{3},\varphi)$, that is, the charge of capacitor $C$,
the flux of the inductor $L$ and the flux of the memristor, respectively.
Accordingly, by taking derivatives in \eqref{eq:2}, the following set of
differential equations is proposed,
\begin{equation}%
\begin{array}
[c]{rcl}%
C\dot{v}_{1} & = & i_{3}-W(\varphi)v_{1},\\
L\dot{i}_{3} & = & Ri_{3}-v_{1},\\
\dot{\varphi} & = & v_{1},
\end{array}
\label{eq:3}%
\end{equation}
where $\dot{q}_{1}=i_{1}=C\dot{v}_{1},$ $\dot{q}_{3}=i_{3},$ $\dot{\varphi
}_{4}=v_{4}=Ri_{3}$ and
\[
W(\varphi)=\frac{dq}{d\varphi}.
\]
Finally, they rewrite the system as follows,
\begin{equation}%
\begin{array}
[c]{rcl}%
\dot{x} & = & \alpha\left(  y-W(z)x\right)  ,\\
\dot{y} & = & -\xi x+\beta y,\\
\dot{z} & = & x,
\end{array}
\label{chua2008}%
\end{equation}
where $x=v_{1},\,y=i_{3},\,z=\varphi,$ and the parameters used are
$\alpha=1/C,$ $\xi=1/L,$ and $\beta=R/L,$ so that, $\alpha,\xi,\beta>0$. An
important observation is that the parameter $\alpha$ is not essential so that
it can be removed with the change of variables and parameters
\begin{equation}%
\begin{split}
&  \widetilde{x}=x,\quad\widetilde{y}=\alpha y,\quad\widetilde{z}%
=z,\quad\widetilde{\xi}=\alpha\xi,\\
&  \widetilde{a}=\alpha a,\quad\widetilde{b}=\alpha b,\quad\widetilde
{W}=\alpha W,
\end{split}
\label{removing-alpha}%
\end{equation}
to be assumed in the sequel, omitting also tildes to alleviate the notation.
Therefore, we need to study the system
\begin{equation}%
\begin{array}
[c]{rcl}%
\dot{x} & = & -W(z)x+y,\\
\dot{y} & = & -\xi x+\beta y,\\
\dot{z} & = & x,
\end{array}
\label{ap1:1}%
\end{equation}
where $W(z)=q^{\prime}(z)=3z^{2}+2az+b,$ and
\begin{equation}
q(z)=z^{3}+az^{2}+bz,\label{cubica-ap-1}%
\end{equation}
with $a^{2}-3b<0,$ which assumes that the memristor is passive ($q^{\prime
}(z)>0$).

System \eqref{ap1:1} belongs to a more general class of systems whose
reduction is possible thanks to the existence of a first integral, as shown in
the Appendix. Like other models of memristor oscillators, system \eqref{ap1:1}
has some special feature. For instance, it has a continuum of equilibria on
the z-axis. Furthermore, the Jacobian matrix at any of these points has a zero
eigenvalue.

Taking the parameters $a_{11}=-1,\ a_{12}=1,\ a_{21}=-\xi$ and $a_{22}=\beta$
in Proposition \ref{theor:1} of the appendix, we obtain that for all
$h\in\mathbb{R}$, system \eqref{ap1:1} has an invariant manifold $S_{h}$
defined by
\begin{equation}
S_{h}=\{(x,y,z)\in\mathbb{R}^{3}:-\beta x+y-\beta z^{3}-a\beta z^{2}+\left(
\xi-b\beta\right)  z=h\}. \label{invariant}%
\end{equation}
Moreover, assuming $c=1$ in Corollary \ref{theor-cubic-q} of the appendix, we
obtain that on each invariant manifold $S_{h},$ the system is topologically
equivalent to the Li\'{e}nard system
\begin{equation}%
\begin{split}
\dot{x}  &  =y-x^{3}-x^{2}-(b-\beta)x,\\
\dot{y}  &  =\beta x^{3}+a\beta x^{2}+\left(  b\beta-\xi\right)  x+h.
\end{split}
\label{lienard:ap1}%
\end{equation}
In Figure \ref{invariant-surface}, we show the invariant manifold
\eqref{invariant} corresponding to $h=0.3$ and the set of parameters $\xi
=100$, $a=b=1$, $\beta=5$, along with the phase space of the equivalent
Li\'{e}nard system \eqref{lienard:ap1}. \begin{figure}[pth]
\begin{center}
\includegraphics[width=13cm]{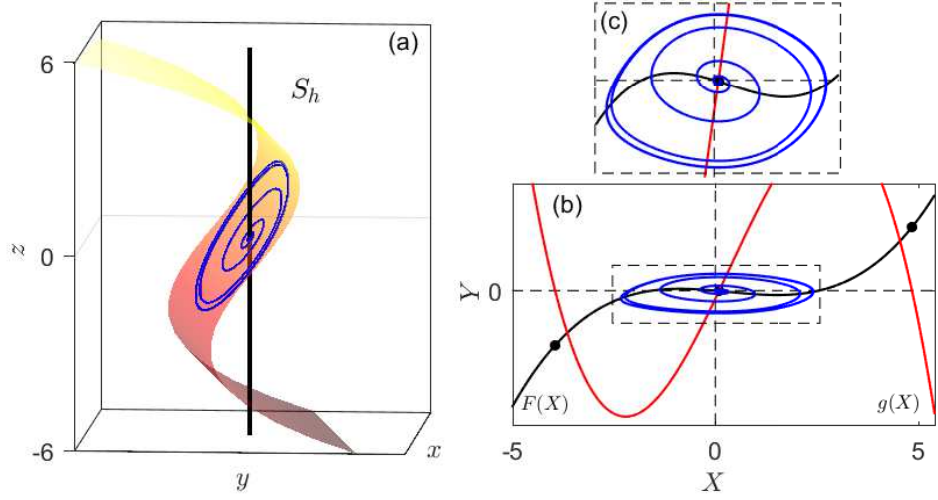}
\end{center}
\caption{(a) The invariant manifold \eqref{invariant} corresponding to the set
of parameters $\xi=100$, $a=b=1$, $\beta=5$ and $h=0.3$ is shown. In black the
infinite number of equilibrium points of the system and in blue a periodic
orbit of the system contained in the invariant manifold. (b) The phase plane
of the equivalent Li\'{e}nard system \eqref{lienard:ap1} corresponding to the
set parameters given in (a) showing a limit cycle in blue, in red the function
$g(X)$ and the function $F(X)$ in black. (c) A zoom of figure (b) is shown.}%
\label{invariant-surface}%
\end{figure}From Proposition \ref{canonical-forms}(a), the system can be
rewritten as
\begin{equation}
\dot{x}=y,\quad\dot{y}=\mu_{1}+\mu_{2}x+\mu_{3}y+x^{3}-3x^{2}y,
\label{norma-form-ap1}%
\end{equation}
where the new parameter are
\begin{equation}%
\begin{split}
\mu_{1}  &  =\frac{1}{27\beta^{5/2}}(27h+9a\xi+2a^{3}\beta-9ab\beta),\\
\mu_{2}  &  =\frac{1}{3\beta^{2}}\left(  \beta(3b-a^{2})-3\xi\right)
,\quad\mu_{3}=\frac{1}{3\beta}\left(  a^{2}-3b+3\beta\right)  .
\end{split}
\label{parameters-ap1}%
\end{equation}
From Remark \ref{existence}, for $\mu_{3}>0$ sufficiently small, we have that
for all $\mu_{2}<-(5/2)\mu_{3}<0$ and $\mu_{1}$ such that
\begin{equation}
|\mu_{1}|<\left(  \frac{\mu_{3}}{3}\right)  ^{3/2}-\left(  \frac{\mu_{3}}%
{3}\right)  ^{3/2}\mu_{2}, \label{cylinder-3}%
\end{equation}
the system has a stable limit cycle. Therefore, we can give the following
result in terms of the parameter $h$, which is associated with the invariant
manifolds $S_{h}$ of 3D system \eqref{sisgeneral}. This result guarantees the
existence of a topological sphere in the 3D phase-space completely foliated by
periodic orbits.

\begin{proposition}
\label{prop-sphare} Consider system \eqref{ap1:1} with $\beta,\xi>0,$, the
function $q$ defined as in \eqref{cubica-ap-1}, $a^{2}-3b<0$ and
\begin{equation}
a^{2}-3b+3\beta>0 \label{cylinder-1}%
\end{equation}
sufficiently small. Additionally, suppose that the following inequalities
hold
\begin{equation}%
\begin{split}
0  &  <3b-a^{2}<3\xi/\beta,\\
\beta(3b-a^{2})-3\xi &  <(5/2)\left(  3b-a^{2}-3\beta\right)  \beta<0.
\end{split}
\label{cylinder-2}%
\end{equation}
Then for all $h\in\mathbb{R}$ with%
\[
-\frac{A}{27}<h<\frac{B}{27},
\]
where
\begin{equation}%
\begin{split}
A  &  =\left(  4a^{2}\beta+3\beta^{2}-12b\beta+9\xi\right)  \sqrt
{a^{2}-3b+3\beta}+9a\xi+2a^{3}\beta-9ab\beta,\\
B  &  =\left(  4a^{2}\beta+3\beta^{2}-12b\beta+9\xi\right)  \sqrt
{a^{2}-3b+3\beta}-9a\xi-2a^{3}\beta+9ab\beta,
\end{split}
\label{limits-AB}%
\end{equation}
the system has a stable periodic orbit. Moreover, there exist a topological
sphere $\Omega$ (see Figure \ref{sphereAp1}) foliated by such periodic orbits. \
\end{proposition}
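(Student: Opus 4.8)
The plan is to translate the abstract inequality conditions \eqref{cylinder-1} and \eqref{cylinder-2} into the language of the normal-form parameters $(\mu_{1},\mu_{2},\mu_{3})$ via the explicit substitution \eqref{parameters-ap1}, and then invoke Remark \ref{existence} to produce a stable limit cycle for each admissible value of $h$. First I would verify the hypotheses of Remark \ref{existence} one by one. From \eqref{parameters-ap1} the sign of $\mu_{3}$ is controlled by $a^{2}-3b+3\beta$, so \eqref{cylinder-1} being positive and sufficiently small is exactly the statement that $\mu_{3}>0$ is sufficiently small. Next, the first line of \eqref{cylinder-2}, namely $0<3b-a^{2}<3\xi/\beta$, combined with the formula $\mu_{2}=\tfrac{1}{3\beta^{2}}(\beta(3b-a^{2})-3\xi)$, forces $\mu_{2}<0$; and the second line of \eqref{cylinder-2} is precisely the algebraic rearrangement of the requirement $\mu_{2}<-(5/2)\mu_{3}$, once one clears the denominators $3\beta^{2}$ and $3\beta$. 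I would present this as a short computation confirming that $\beta(3b-a^{2})-3\xi<(5/2)(3b-a^{2}-3\beta)\beta$ is equivalent to $\mu_{2}<-(5/2)\mu_{3}$.

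Once the $\mu_{2},\mu_{3}$ conditions are secured, the remaining task is to identify the range of $h$ for which the bound \eqref{cylinder-3} on $|\mu_{1}|$ holds. Here $\mu_{1}$ depends \emph{affinely} on $h$ through \eqref{parameters-ap1}, with $\mu_{1}=\tfrac{1}{27\beta^{5/2}}(27h+9a\xi+2a^{3}\beta-9ab\beta)$, whereas the right-hand side of \eqref{cylinder-3} is independent of $h$. Therefore the admissible set of $h$ is an interval symmetric about the value that makes $\mu_{1}=0$, and solving the two inequalities $\mu_{1}<(\mu_{3}/3)^{3/2}(1-\mu_{2})$ and $\mu_{1}>-(\mu_{3}/3)^{3/2}(1-\mu_{2})$ for $h$ yields the two endpoints $-A/27$ and $B/27$. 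I expect the bookkeeping to reduce, after substituting $\mu_{3}/3=(a^{2}-3b+3\beta)/(9\beta)$ so that $(\mu_{3}/3)^{1/2}=\sqrt{a^{2}-3b+3\beta}/(3\sqrt{\beta})$, to exactly the quantities $A$ and $B$ displayed in \eqref{limits-AB}; the factor $\sqrt{a^{2}-3b+3\beta}$ in both $A$ and $B$ is the telltale sign that it originates from $(\mu_{3}/3)^{3/2}$. This step is essentially a careful but routine algebraic manipulation, and I would state the endpoints without grinding through every term.

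For the second assertion, the existence of the topological sphere $\Omega$, the plan is to assemble the family of limit cycles across the invariant manifolds $S_{h}$ of \eqref{invariant}. For each fixed $h$ in the open interval $(-A/27,B/27)$ the planar reduction on $S_{h}$ (the Li\'enard system \eqref{lienard:ap1}, topologically equivalent to the normal form \eqref{norma-form-ap1}) possesses a unique stable limit cycle $\gamma_{h}$, which lifts to a periodic orbit lying in $S_{h}\subset\mathbb{R}^{3}$. Since the leaves $S_{h}$ foliate a region of the 3D state space and vary continuously (indeed smoothly) with $h$, the union $\bigcup_{h}\gamma_{h}$ sweeps out a two-dimensional surface. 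At the two endpoints $h=-A/27$ and $h=B/27$ the limit cycle degenerates—by the construction, these endpoints are where $|\mu_{1}|$ meets the boundary \eqref{cylinder-3}, i.e. where the periodic orbit is born from or dies at the central equilibrium through the Andronov-Hopf bifurcation of \eqref{curveH}—so the family of periodic orbits shrinks to a point at each end. A family of topological circles parameterized by a closed interval, nondegenerate in the interior and collapsing to a single point at each endpoint, is homeomorphic to a $2$-sphere; this gives the topological sphere $\Omega$ foliated by the periodic orbits.

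The main obstacle I anticipate is not the algebra that pins down $A$ and $B$, but making the closing-up argument for $\Omega$ rigorous at the two endpoints. One must confirm that as $h\to -A/27^{+}$ and $h\to B/27^{-}$ the limit cycles $\gamma_{h}$ genuinely contract to the (moving) central equilibrium $\mathbf{x}_{C}$ rather than, say, escaping to infinity or merging via a homoclinic connection; the former is what the Andronov-Hopf scenario from \eqref{curveH} guarantees, and Remark \ref{existence} already certifies that the homoclinic curve lies \emph{outside} the parameter strip $\mu_{2}<-(5/2)\mu_{3}$ used here, so the only available death mechanism at the endpoints is the Hopf collapse. I would therefore emphasize that the inequality $\mu_{2}<-(5/2)\mu_{3}$ (rather than merely $\mu_{2}<-(5/3)\mu_{3}$) is precisely what rules out the homoclinic degeneration and secures a clean Hopf boundary at both ends, so that the swept surface closes up into a sphere rather than a cylinder or a more complicated object.
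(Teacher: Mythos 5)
Your proposal follows essentially the same route as the paper's proof: substitute \eqref{parameters-ap1} into the hypotheses of Remark \ref{existence}, observe that the two lines of \eqref{cylinder-2} are exactly $\mu_{2}<0$ and $\mu_{2}<-(5/2)\mu_{3}$ after clearing denominators, solve the affine-in-$h$ bound on $|\mu_{1}|$ to obtain the interval $(-A/27,\,B/27)$, and let $h$ sweep this interval to assemble the sphere. One caveat on the middle step: the bound you display, $|\mu_{1}|<(\mu_{3}/3)^{3/2}(1-\mu_{2})$, reproduces a typo in \eqref{cylinder-3}; the correct bound, which is the one stated in Remark \ref{existence} and the one the paper's proof actually uses, is
\[
|\mu_{1}|<\left(\frac{\mu_{3}}{3}\right)^{3/2}-\left(\frac{\mu_{3}}{3}\right)^{1/2}\mu_{2}
=\frac{1}{3}\left(\frac{\mu_{3}}{3}\right)^{1/2}\left(\mu_{3}-3\mu_{2}\right).
\]
Only this form factors out a single $\left(\mu_{3}/3\right)^{1/2}$ and yields
$|27h+9a\xi+2a^{3}\beta-9ab\beta|<\left(4a^{2}\beta+3\beta^{2}-12b\beta+9\xi\right)\sqrt{a^{2}-3b+3\beta}$,
i.e.\ the endpoints $-A/27$ and $B/27$ of \eqref{limits-AB}; with the factor $(1-\mu_{2})$ as you wrote it, the endpoint formulas would carry $(a^{2}-3b+3\beta)^{3/2}$ and would not match. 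On the final step you actually go beyond the paper: its proof disposes of the sphere in one sentence (``varying the parameter $h$, we obtain a sphere''), whereas you argue that at $h=-A/27$ and $h=B/27$ the parameters hit the Hopf curve \eqref{curveH}, that the homoclinic mechanism is excluded precisely because the homoclinic curve lies in $-\nu_{2}(\theta^{\ast})\mu_{3}<\mu_{2}$ while we work in $\mu_{2}<-(5/2)\mu_{3}$, and hence that the family of cycles collapses to a point at each end, closing the swept surface into a sphere. That is a genuine strengthening of the paper's argument; note only that a fully rigorous version would still require uniqueness (or at least continuous dependence on $h$) of the stable limit cycle on each leaf $S_{h}$, which neither your proposal nor the paper establishes beyond the first-order Melnikov picture.
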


\begin{proof}
From Remark \ref{existence}, and after substituting the values of $\mu_{1}%
,\mu_{2}$ and $\mu_{3}$ given in \eqref{parameters-ap1}, we obtain the
inequalities \eqref{cylinder-1}-\eqref{cylinder-2}. Now, from
\eqref{cylinder-3} we obtain $|\mu_{1}|<(1/3)\left(  \mu_{3}/3\right)
^{1/2}\left(  \mu_{3}-3\mu_{2}\right)  ,$ so that $\mu_{3}-3\mu_{2}>0,$ since
from hypotheses we have $\mu_{2}<-(5/2)\mu_{3}<0.$ Now after some algebra we obtain%

\[
|27h+9a\xi+2a^{3}\beta-9ab\beta|<\left(  4a^{2}\beta+3\beta^{2}-12b\beta
+9\xi\right)  \sqrt{a^{2}-3b+3\beta}.
\]
Taking into account the absolute value, and grouping terms, we obtain the
values of $A$ and $B$ defined in \eqref{limits-AB}. Finally, from Remark
\ref{existence} system \eqref{ap1:1} has a stable periodic orbit on each
$S_{h}$ defined in \eqref{invariant}, so varying the parameter $h$, we obtain
a sphere foliated by such periodic orbits.
\end{proof}

\begin{figure}[pth]
\begin{center}
\includegraphics[width=12cm]{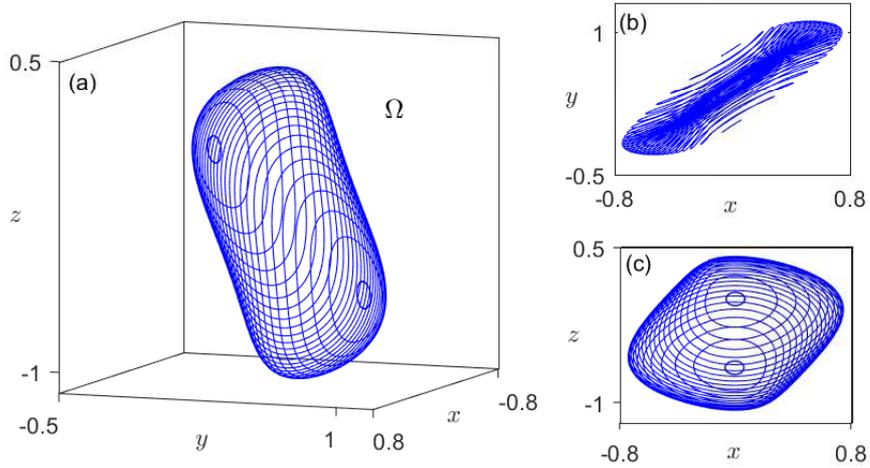}
\end{center}
\caption{Using \eqref{solucion-ap} on each invariant manifold $S_{h}$ defined
in \eqref{invariant}, some slices of the surface $\Omega$ given by Proposition
\ref{prop-sphare} for system \eqref{ap1:1} with parameters $a=1,b=4.8,\beta=5$
and $\xi=80$ are shown. For this set of parameters we get $\mu_{3}=0.106>0$,
$\mu_{2}=-2.3<-(5/2)\mu_{3}$, $A=1180.1$ and $B=152.2$.}%
\label{sphereAp1}%
\end{figure}

\section{False Hidden Attractors in Memristor-Based Autonomous Duffing
Oscillators}

\label{sec:5}

An attractor is called a hidden attractor if its basin of attraction does not
intersect any neighborhood of equilibria; otherwise, it is called a
self-excited attractor, for more details see \cite{Kuznetsov2011,Leonov2011b}.
 Recently in \cite{mentira2018,mentira2018b,mentira2018c} it was
reported the existence of an infinite number of hidden attractors in a
memristor-based autonomous Duffing oscillators, whose  memristance function is
a cubic polynomial. Here, by using a similar approach to the followed in the
previous section, we will show that such hidden attractors are not possible,
so that the numerical simulations included in \cite{mentira2018,mentira2018b,mentira2018c}
 are misleading.

The quoted memristor based autonomous Duffing oscillator is modeled by the
dynamical system
\begin{equation}%
\begin{split}
\dot{x} &  =y,\\
\dot{y} &  =z,\\
\dot{z} &  =-\alpha z-M(x)y,
\end{split}
\label{mentira-1}%
\end{equation}
where the memristance function $M$ (possibly discontinuous) is defined as
\begin{equation}
M(x)=\frac{d\phi(x)}{dx}\label{functionM}%
\end{equation}
and $\phi$ is a continuous function. System \eqref{mentira-1} has a continuum
of equilibria, since any point of the $x$-axis is an equilibrium point. In the
next result,  we show that even system \eqref{mentira-1} does not belong to
the family \eqref{sisgeneral} of the appendix, the system also has the
property of possessing an infinite number of invariant manifolds.

\begin{proposition}
\label{propo:mentira0}Consider system \eqref{mentira-1} with the function $M$
defined as in \eqref{functionM}. For any $h\in\mathbb{R}$ the set
\begin{equation}
S_{h}=\{(x,y,z)\in\mathbb{R}^{3}\colon H(x,y,z)=h\}\label{sh-cubica}%
\end{equation}
is an invariant manifold for the system, where we have introduced the
continuous function
\begin{equation}
H(x,y,z)=\phi(x)+\alpha y+z.\label{invariante-cubica}%
\end{equation}
Therefore, the system has an infinite number of invariant manifolds foliating
the whole $\mathbb{R}^{3}$, and so the dynamics is essentially two-dimensional.
\end{proposition}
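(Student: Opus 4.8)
The plan is to verify directly that the function $H(x,y,z)=\phi(x)+\alpha y+z$ is a first integral of system \eqref{mentira-1}, that is, that its derivative along the flow vanishes identically. This is the entire content of the claim, since once $\dot{H}\equiv 0$ along trajectories, every level set $S_h=\{H=h\}$ is automatically invariant, and as $h$ ranges over $\mathbb{R}$ these level sets partition $\mathbb{R}^3$.

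First I would compute $\dot{H}$ by the chain rule, using the three equations of the system. We have
\[
\dot{H}=\phi'(x)\dot{x}+\alpha\dot{y}+\dot{z}.
\]
Substituting $\dot{x}=y$, $\dot{y}=z$, $\dot{z}=-\alpha z-M(x)y$, and recalling from \eqref{functionM} that $\phi'(x)=M(x)$, this becomes
\[
\dot{H}=M(x)\,y+\alpha z+\bigl(-\alpha z-M(x)y\bigr)=0.
\]
Thus $H$ is constant along every solution, so each $S_h$ is invariant under the flow. I would then note that since $H$ is continuous and the map $x\mapsto\phi(x)+\alpha y+z$ is surjective onto $\mathbb{R}$ (indeed affine and nonconstant in $z$), the level sets $\{S_h\}_{h\in\mathbb{R}}$ are nonempty, pairwise disjoint, and cover all of $\mathbb{R}^3$, which gives the foliation and the conclusion that the dynamics reduces to a two-dimensional flow on each leaf.

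I do not anticipate a serious obstacle here: the computation is a one-line cancellation, and the only point requiring a small remark is that $\phi$ is merely assumed continuous while the memristance $M=\phi'$ need only exist in a possibly distributional or piecewise sense (the paper flags $M$ as possibly discontinuous). The mild subtlety to address is therefore the regularity: one should interpret $S_h$ as invariant in the sense that solutions starting on a leaf remain on it, which follows because $H$ is an integral of the motion wherever the right-hand side is defined; where $\phi$ fails to be differentiable the identification $\phi'=M$ must be read in the appropriate (e.g.\ Filippov or Carath\'eodory) framework, but the cancellation structure of $\dot{H}$ is unaffected. I would state this briefly and conclude that the essential two-dimensionality follows immediately from the existence of the continuous conserved quantity $H$.
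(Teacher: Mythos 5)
Your proposal is correct and takes essentially the same approach as the paper: both verify by the chain rule that $H$ is a first integral, relying on the cancellation $M(x)y+\alpha z-\alpha z-M(x)y=0$, and then conclude that the level sets $S_h$ foliate $\mathbb{R}^3$. The only difference is in handling the points where $\phi$ fails to be differentiable: you defer to a Filippov/Carath\'eodory reading, while the paper argues more elementarily that $H$ evaluated along a solution is piecewise constant and, being continuous, therefore globally constant.
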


\begin{proof}
Taking $H$ as in \eqref{invariante-cubica}, define for any solution
$(x(\tau),y(\tau),z(\tau))$ of \eqref{mentira-1} the auxiliary continuous
function
\[
h(\tau)=H(x(\tau),y(\tau),z(\tau))
\]
Now, a direct computation gives, excepting the points of possible
non-differentiability,
\[
h^{\prime}(\tau)=\frac{d\phi(x)}{dx}\dot{x}+\alpha\dot{y}+\dot{z}=M(x)y+\alpha
z-\alpha z-M(x)y=0.
\]
Then $h$ is piecewise constant along the orbits of \eqref{mentira-1}, but as
$h$ is continuous by definition, it should be globally constant. In short, the
level sets of $H$ are invariant for the flow.
\end{proof}

Now, by using the above result, we reduce the study of the dynamical behavior
of the system, to the study of a planar system.

\begin{proposition}
\label{propo:mentira1}Consider system \eqref{mentira-1} with the function $M$
defined as in \eqref{functionM}. Then on each invariant set $S_{h}$ defined in
\eqref{sh-cubica} the system is topologically equivalent to the planar system
\begin{equation}%
\begin{split}
\dot{x} &  =y,\\
\dot{y} &  =-\phi(x)-\alpha y+h.
\end{split}
\label{reducido-mentira}%
\end{equation}
Moreover, $(x\left(  \tau\right)  ,y\left(  \tau\right)  )\in\mathbb{R}^{2}$
is a solution of the above system if and only if $E_{h}\left(  x\left(
\tau\right)  ,y\left(  \tau\right)  \right)  $ is a solution of system
\eqref{mentira-1}, where
\begin{equation}
E_{h}\left(  X\left(  \tau\right)  ,Y\left(  \tau\right)  \right)  =%
\begin{pmatrix}
x\left(  \tau\right)  \\
y\left(  \tau\right)  \\
h-\phi(x(\tau))-\alpha y(\tau)
\end{pmatrix}
\end{equation}

\end{proposition}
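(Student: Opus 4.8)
The plan is to exploit the fact that the invariant manifold $S_h$ is a global graph over the $(x,y)$-plane and that the map $E_h$ is precisely the inverse of the projection onto that plane. First I would observe that, since $H(x,y,z)=\phi(x)+\alpha y+z$ depends affinely on $z$ with unit coefficient, the equation $H(x,y,z)=h$ can be solved uniquely and globally for $z$, yielding $z=h-\phi(x)-\alpha y$. Consequently the projection $\pi(x,y,z)=(x,y)$ restricts to a homeomorphism $\pi|_{S_h}\colon S_h\to\mathbb{R}^2$ whose continuous inverse is exactly $E_h$ (continuous because $\phi$ is continuous). This already identifies $S_h$ topologically with $\mathbb{R}^2$ and singles out $E_h$ as the candidate equivalence.

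Next I would compute the dynamics induced on $S_h$ in the coordinate $(x,y)$ furnished by $E_h$. Substituting $z=h-\phi(x)-\alpha y$ into the first two equations of \eqref{mentira-1} gives $\dot{x}=y$ and $\dot{y}=z=h-\phi(x)-\alpha y$, which is literally the planar system \eqref{reducido-mentira}. Hence the pushforward under $\pi$ of the vector field of \eqref{mentira-1} restricted to $S_h$ is the vector field of \eqref{reducido-mentira}, which is the asserted topological equivalence.

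For the \emph{moreover} part I would verify directly that $E_h$ carries solutions to solutions while preserving the time $\tau$. In the forward direction, given a solution $(x(\tau),y(\tau))$ of \eqref{reducido-mentira}, set $z(\tau)=h-\phi(x(\tau))-\alpha y(\tau)$, so the triple equals $E_h(x(\tau),y(\tau))$. The first equation of \eqref{mentira-1} is immediate; the second reads $\dot{y}=z$ by the definition of $z$ together with the second equation of \eqref{reducido-mentira}; and for the third I differentiate $z(\tau)$ and use $\phi'=M$, $\dot{x}=y$ and $\dot{y}=z$ to obtain $\dot{z}=-M(x)y-\alpha\dot{y}=-M(x)y-\alpha z$, which is exactly the third equation. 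The converse is immediate, since any solution of \eqref{mentira-1} lying on $S_h$ satisfies $z=h-\phi(x)-\alpha y$, so its first two components solve \eqref{reducido-mentira}.

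The main obstacle is that $M$ is allowed to be discontinuous, so $\phi$ need not be $C^1$ and the chain-rule identity $\frac{d}{d\tau}\phi(x(\tau))=M(x(\tau))\,\dot{x}(\tau)$ may fail on the exceptional set where $\phi$ is not differentiable. I would handle this exactly as in the proof of Proposition \ref{propo:mentira0}: the relation $\dot{z}=-M(x)y-\alpha z$ holds away from that negligible set, and the continuity of $\phi$, $x(\tau)$ and $y(\tau)$ promotes it to the global (Carath\'eodory) identity defining a solution. Wherever $M$ is continuous the argument is classical, and since both $E_h$ and its inverse $\pi|_{S_h}$ are continuous and preserve $\tau$, the conclusion is a genuine topological equivalence (indeed a conjugacy).
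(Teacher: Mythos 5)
Your proposal is correct and follows essentially the same route as the paper's own proof: solve $H(x,y,z)=h$ for $z$, substitute into the first two equations of \eqref{mentira-1} to obtain \eqref{reducido-mentira}, and verify the \emph{moreover} part by differentiating $z(\tau)=h-\phi(x(\tau))-\alpha y(\tau)$ along a planar solution. Your additional remarks --- that $\pi|_{S_h}$ is a homeomorphism with inverse $E_h$, and that the possible non-differentiability of $\phi$ is handled as in Proposition \ref{propo:mentira0} --- are welcome refinements of points the paper's proof treats implicitly, but they do not change the argument.
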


\begin{proof}
From Proposition \ref{propo:mentira0} we can solve for $z$ in the equation
$H(x,y,z)=h$, and write
\[
z=h-\phi(x)-\alpha y.
\]
Replacing this expression into the first and second equation of
\eqref{mentira-1} we obtain system \eqref{reducido-mentira}. Suppose that
$(x\left(  \tau\right)  ,y\left(  \tau\right)  )\in\mathbb{R}^{2}$ is a
solution of system \eqref{reducido-mentira}. Taking
\[
z(\tau)=h-\alpha y(\tau)-\phi(x(\tau))
\]
we obtain
\begin{align*}
\dot{z}(\tau)  &  =-\alpha\dot{y}(\tau)-\frac{d\phi(x(\tau))}{dx}\dot{x}%
(\tau)=-\alpha\left(  h-\phi(x(\tau))-\alpha y(\tau)\right)  -M(x(\tau
))y(\tau)=\\
&  =-\alpha\left(  z(\tau)\right)  -M(x(\tau))y(\tau).
\end{align*}
and the proposition follows.
\end{proof}

In the following result, we show that for $\alpha\neq0$, the system does not
have periodic solutions.

\begin{proposition}
\label{propo:mentira}Consider system \eqref{reducido-mentira}. The following
statements hold.

\begin{itemize}
\item[(a)] For $\alpha=0$ the system is Hamiltonian.

\item[(b)] For $\alpha\neq0$ the system does not have periodic solutions.
\end{itemize}
\end{proposition}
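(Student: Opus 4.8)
The plan is to analyze the planar system \eqref{reducido-mentira} directly, treating the two statements separately since they rest on different mechanisms. For part (a), I would exhibit an explicit Hamiltonian function. When $\alpha=0$, the system reads $\dot{x}=y$, $\dot{y}=-\phi(x)+h$, which has the classical mechanical form. Setting
\[
\mathcal{H}(x,y)=\frac{y^{2}}{2}+\Phi(x)-h x,
\]
where $\Phi$ is any antiderivative of $\phi$ (so that $\Phi'(x)=\phi(x)$), one checks immediately that $\dot{x}=\partial\mathcal{H}/\partial y=y$ and $\dot{y}=-\partial\mathcal{H}/\partial x=-\phi(x)+h$, confirming the system is Hamiltonian. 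The only subtlety is that $\phi$ is merely continuous, so $\Phi$ exists as a $C^{1}$ antiderivative and the Hamiltonian structure holds in the usual sense.

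For part (b), the cleanest route is Bendixson's criterion, exactly as invoked in the Remark following \eqref{jacobian}. Writing the vector field of \eqref{reducido-mentira} as $(P,Q)=(y,\,-\phi(x)-\alpha y+h)$, I would compute its divergence
\[
\frac{\partial P}{\partial x}+\frac{\partial Q}{\partial y}=0+(-\alpha)=-\alpha,
\]
which is constant and nonzero whenever $\alpha\neq0$. By Bendixson's criterion \cite{Kocak91}, a vector field whose divergence has constant sign on a simply connected region cannot possess any periodic orbit entirely contained in that region; since $\mathbb{R}^{2}$ is simply connected and the divergence equals $-\alpha\neq0$ everywhere, no periodic solution can exist. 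This is the same argument used earlier in the Remark after \eqref{jacobian}, so it fits the paper's established style.

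I do not anticipate a genuine obstacle here, as both parts are elementary once the right tool is named. The one point requiring mild care is the regularity of $\phi$: since $M$ (hence possibly $\phi'$) may be discontinuous per the discussion around \eqref{functionM}, I would note that $P$ and $Q$ are continuous and that $Q$ is differentiable in $y$ with $\partial Q/\partial y=-\alpha$ regardless of any lack of smoothness in $x$, so the divergence computation and Bendixson's criterion apply without needing differentiability of $\phi$ in $x$. If one wishes to avoid even invoking differentiability of $P$ in $x$, the divergence argument can be phrased via the area-contraction form of Bendixson, observing that the flow contracts (or expands) phase-plane area at the uniform exponential rate $e^{-\alpha t}$, which again precludes closed orbits.
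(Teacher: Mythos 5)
Your proposal is correct and follows essentially the same route as the paper: part (a) by exhibiting an explicit Hamiltonian and part (b) by noting the constant divergence $-\alpha$ and invoking Bendixson's criterion, exactly as the paper does. In fact your Hamiltonian $\frac{y^{2}}{2}+\Phi(x)-hx$ (with $\Phi'=\phi$) is written correctly, whereas the paper's proof misprints it as $\frac{y^{2}}{2}+\phi'(x)$ (a derivative where an antiderivative is meant, and with the $-hx$ term omitted), and your remark on handling the possible non-smoothness of $\phi$ is a sound addition.
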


\begin{proof}
The divergence of the system is $\Delta=-\alpha.$ \ Then, when $\alpha=0$ the
system corresponds to the Hamiltonian%
\[
H(x,y)=\frac{y^{2}}{2}+\phi^{\prime}(x).
\]
For $\alpha\neq0$ the divergence of system \eqref{reducido-mentira} does not
change sign, thus from Bendixson's criterion \cite{Kocak91}, system
\eqref{mentira-1} does not have periodic solutions.
\end{proof}

\begin{remark}
\label{mentira}Note that as a consequence of proposition \ref{propo:mentira1}
and \ref{propo:mentira}, the $3D$ system \eqref{mentira-1} system \ cannot
have periodic orbits for any continuous function $\phi$ and $\alpha\neq0.$
However, when $\alpha=0$ the system could have an infinite number of periodic
orbits on each invariant set $S_{h}$ defined in \eqref{sh-cubica}.
\end{remark}

Regarding \cite{mentira2018,mentira2018b,mentira2018c} authors consider system
\eqref{mentira-1} with the function $\phi(x)=\omega x+\beta x^{3}$ and the set
of parameters $\alpha=0.0001,$ $\omega=0.35,$ $\beta=0.85.$ In both quoted
references, authors reported the existence of an infinite number of stable
periodic orbits coexisting with an infinite number of stable equilibria, by
taking into account several numerical simulations, so concluding the existence
of hidden attractors.

From Propositions \ref{propo:mentira0} and \ref{propo:mentira1}, we
obtain\ the invariant manifolds
\[
S_{h}=\{(x,y,z)\in\mathbb{R}^{3}\colon\omega x+\beta x^{3}+\alpha y+z=h\},
\]
and the planar system ruling the dynamics on each $S_{h}$ given by
\[
\dot{x}=y,\quad\dot{y}=-\omega x-\beta x^{3}-\alpha y+h.
\]

From Remark \ref{mentira} we note that, the system cannot have
periodic orbits, and so, the statement made in the quoted papers is clearly
wrong, probably after giving too much credit to numerical simulations. \ This
emphasizes the relevance of the approach followed in this work which allows to
avoid misconceptions coming just from numerical simulations.

\section{Conclusions}

Motivated by the dynamical analysis of 3D memristor oscillators whose
nonlinear characteristics is a cubic polynomial, and after showing that their
dynamics is essentially two-dimensional, the need to consider a disregarded
unfolding of the Bogdanov-Takens singularity naturally arose. The
corresponding bifurcation set, including both local and global bifurcations
has been described. While local bifurcations can be easily detected, the
characterization of global bifurcations parameters curves is  much more
involved; only by resorting to Melnikov's theory it was possible to obtain such
curves providing a complete description of the bifurcation set.

Regarding the considered 3D memristor oscillators, and by working within some
parameters regions of the above bifurcation set, it has been possible to show rigorously
the existence of multiple periodic orbits leading to a topological sphere.

When the same approach is
applied to a different family of 3D memristor oscillators, it has been shown that the oscillations
are not possible, contrarily to what had been recently claimed.

\section*{Acknowledgements}

The first author is supported by Pontificia Universidad Javeriana Cali-Colombia. E. Freire and E. Ponce are partially
supported by MINECO/FEDER grant MTM2015-65608-P and by the
\textit{Consejer\'{\i}a de Econom\'{\i}a, Innovaci\'on, Ciencia y Empleo de la
Junta de Andaluc\'{\i}a} under grant P12-FQM-1658.

\appendix{\bf Appendix: Dimensional reduction in 3D memristor oscillators}

We consider a family of three-dimensional systems, which is general enough to
capture all the mathematical models of memristor oscillators given in
\eqref{ap1:1}. Such family was been studied in \cite{amador2017} and
\cite{ponce2017}, where the authors showed that the dynamics of such a family
of three-dimensional systems is essentially ruled by a one parameter set of
two-dimensional systems. We consider the system
\begin{equation}%
\begin{array}
[c]{rcl}%
\dot{x} & = & a_{11}W(z)x+a_{12}y,\\
\dot{y} & = & a_{21}x+a_{22}y,\\
\dot{z} & = & x,
\end{array}
\label{sisgeneral}%
\end{equation}
where the constants $a_{11},a_{12},a_{21},a_{22}\in\mathbb{R}$ and the
function $W$ allows to define a continuous function
\begin{equation}
q(z)=\int_{0}^{z}W(s)ds.\label{wq}%
\end{equation}

The next result guarantees that the dynamics of system \eqref{sisgeneral} is
essentially two-dimensional, see \cite{amador2017} for a proof.

\begin{proposition}
\label{theor:1}Consider system \eqref{sisgeneral} where the functions $W$ and
$q$ are related as in \eqref{wq}. For any $h\in\mathbb{R}$, the set
\begin{equation}
S_{h}=\{(x,y,z)\in\mathbb{R}^{3}:-a_{22}x+a_{12}y-a_{12}a_{21}z+a_{11}%
a_{22}q(z)=h\}\label{Sh-general}%
\end{equation}
is an invariant manifold for the system. Therefore, the system has an infinite
family of invariant manifolds foliating the whole $\mathbb{R}^{3}$, and so the
dynamics is essentially two-dimensional.
\end{proposition}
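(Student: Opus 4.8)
The plan is to verify that the function defining $S_h$ is a first integral of \eqref{sisgeneral}, so that each of its level sets is automatically invariant. Concretely, I would introduce
\[
H(x,y,z)=-a_{22}x+a_{12}y-a_{12}a_{21}z+a_{11}a_{22}q(z),
\]
and, along an arbitrary solution $(x(t),y(t),z(t))$ of \eqref{sisgeneral}, differentiate $h(t):=H(x(t),y(t),z(t))$ by the chain rule. Using $q'(z)=W(z)$ from \eqref{wq}, this gives
\[
\dot H=-a_{22}\dot x+a_{12}\dot y+\bigl(a_{11}a_{22}W(z)-a_{12}a_{21}\bigr)\dot z.
\]

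The key step is the substitution and cancellation. Replacing $\dot x=a_{11}W(z)x+a_{12}y$, $\dot y=a_{21}x+a_{22}y$ and $\dot z=x$, the terms organize into three pairs that cancel identically: the $W(z)x$ contributions $-a_{11}a_{22}W(z)x$ and $+a_{11}a_{22}W(z)x$, the $y$ contributions $-a_{12}a_{22}y$ and $+a_{12}a_{22}y$, and the linear-in-$x$ contributions $+a_{12}a_{21}x$ and $-a_{12}a_{21}x$. Hence $\dot H\equiv 0$ along every orbit, so $H$ is constant on orbits and each level set $S_h=\{H=h\}$ displayed in \eqref{Sh-general} is invariant under the flow.

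For the foliation claim I would argue as follows. The map $H$ is continuous, and since the coefficient $a_{12}$ of $y$ (equivalently $-a_{22}$ of $x$) is nonzero in the cases of interest, $H$ is surjective onto $\mathbb{R}$ and its gradient $\nabla H=(-a_{22},\,a_{12},\,a_{11}a_{22}W(z)-a_{12}a_{21})$ never vanishes. Therefore every point lies on exactly one level set, the sets $S_h$ are pairwise disjoint regular surfaces whose union is all of $\mathbb{R}^{3}$, and solving $H=h$ for one coordinate realizes the restricted flow on each $S_h$ as a planar system; this is the precise sense in which the dynamics is essentially two-dimensional.

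I expect no serious obstacle. The statement reduces to a one-line differentiation once the correct first integral is written down, and the only mild subtlety is the nonvanishing-gradient condition needed for the level sets to be genuine two-dimensional manifolds rather than merely invariant sets. The genuinely hard part, namely guessing the precise coefficient combination that defines $H$, has already been absorbed into the statement, so what remains is essentially a verification.
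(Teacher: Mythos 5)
Your proof is correct and takes essentially the same approach the paper relies on: the paper defers the proof of Proposition \ref{theor:1} to \cite{amador2017}, but the identical first-integral argument---differentiating the defining function along an arbitrary solution and checking that the three pairs of terms cancel---is exactly what the paper itself does for the analogous Proposition \ref{propo:mentira0}. Your extra observation that the foliation/regular-surface claim needs $a_{12}\neq 0$ or $a_{22}\neq 0$ (so that $\nabla H$ never vanishes) is a sound clarification consistent with the hypotheses used later in the appendix.
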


In the following result we show that on each invariant set $S_{h}$ given in
\eqref{Sh-general}, and for any continuous function $q$ defined as in
\eqref{wq}, the dynamics is topologically equivalent to a Li\'{e}nard system.
Furthermore, we give for any solution of the Li\'{e}nard system with a given
value of $h$, the corresponding solution of the 3D canonical model
\eqref{sisgeneral}. This result is a generalization of Theorem 3 given in
\cite{amador2017}, where the function $q$ was considered to be a continuous
piecewise linear function.

\begin{proposition}
\label{theor2-cubic}Consider system \eqref{sisgeneral} with the function $q$
defined as in \eqref{wq}. If $a_{12}\neq0$, then on each invariant set $S_{h}$
given in \eqref{Sh-general}, the dynamics is topologically equivalent to the
Li\'{e}nard system \
\begin{equation}
\dot{X}=Y-F(X),\quad\dot{Y}=-g(X)+h,\label{lienard-general}%
\end{equation}
where $F$ and $g$ are given by%
\begin{equation}
F(X)=-a_{11}q(X)-a_{22}X,\quad g(X)=a_{11}a_{22}q(X)-a_{12}a_{21}%
X\label{funcionesFG-general}%
\end{equation}
Moreover, $(X\left(  \tau\right)  ,Y\left(  \tau\right)  )\in\mathbb{R}^{2}$
is a solution of the Li\'{e}nard system \eqref{lienard-general} for a given
$h\in\mathbb{R}$, if and only if $E_{h}\left(  X\left(  \tau\right)  ,Y\left(
\tau\right)  \right)  $ $\in\mathbb{R}^{3}$\ is a solution of system
\eqref{sisgeneral} on $S_{h},$ where
\begin{equation}
E_{h}\left(  X\left(  \tau\right)  ,Y\left(  \tau\right)  \right)  =%
\begin{pmatrix}
Y(\tau)-F(X(\tau))\\
\frac{1}{a_{12}}\left[  (a_{22}^{2}+a_{12}a_{21})Y\left(  \tau\right)
-a_{22}Y\left(  \tau\right)  +h\right]  \\
X\left(  \tau\right)
\end{pmatrix}
.\label{condiciones-iniciales-general}%
\end{equation}

\end{proposition}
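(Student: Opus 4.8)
The plan is to exploit the two structural features of \eqref{sisgeneral}: that $\dot z=x$, and that the only nonlinearity enters through $W(z)x=\frac{d}{d\tau}q(z)$, an exact derivative along orbits (here I use $q'=W$ from \eqref{wq}). These facts suggest using $z$ as the Li\'enard ``position'' and absorbing $a_{11}q(z)$ into a new ``momentum''. Since $a_{12}\neq 0$, I would first solve the defining relation of $S_h$ in \eqref{Sh-general} for $y$, obtaining $y=\frac{1}{a_{12}}\bigl(h+a_{22}x+a_{12}a_{21}z-a_{11}a_{22}q(z)\bigr)$, so that $S_h$ is a graph over the $(x,z)$-plane and in particular a $2$-manifold homeomorphic to $\mathbb{R}^2$. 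I then introduce $X=z$ and $Y=x-a_{11}q(z)-a_{22}z$; equivalently $x=Y-F(X)$ with $F$ as in \eqref{funcionesFG-general}, which is exactly the first entry of the prospective inverse map $E_h$.

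Next I would differentiate along the flow. For $X$ one gets $\dot X=\dot z=x=Y+a_{11}q(X)+a_{22}X=Y-F(X)$, the first Li\'enard equation. For $Y$, using $\dot z=x$ and $\frac{d}{d\tau}q(z)=W(z)x$, the nonlinear term cancels: $\dot Y=\dot x-a_{11}W(z)x-a_{22}x=(a_{11}W(z)x+a_{12}y)-a_{11}W(z)x-a_{22}x=a_{12}y-a_{22}x$. Substituting the expression for $a_{12}y$ read off from the constraint eliminates both $x$ and $q(z)$ and leaves $\dot Y=h+a_{12}a_{21}X-a_{11}a_{22}q(X)=-g(X)+h$ with $g$ as in \eqref{funcionesFG-general}. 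This reproduces \eqref{lienard-general} exactly.

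Finally I would record the correspondence of solutions. The projection $\Pi\colon S_h\to\mathbb{R}^2$, $(x,y,z)\mapsto(X,Y)=(z,\,x-a_{11}q(z)-a_{22}z)$, is a homeomorphism, being invertible via $z=X$, $x=Y-F(X)$, and $y$ recovered from the constraint; substituting $x=Y-F(X)$ into that expression for $y$ reconstructs the map $E_h$ of \eqref{condiciones-iniciales-general}, whose first and third entries are $x=Y-F(X)$ and $z=X$ and whose second entry is $y=\frac{1}{a_{12}}\bigl[(a_{22}^{2}+a_{12}a_{21})X+a_{22}Y+h\bigr]$. Since $\Pi$ pushes the restricted $3$D field forward to precisely the Li\'enard field (this is the content of the two derivative computations) and $\Pi,\,\Pi^{-1}=E_h$ are continuous, the flow on $S_h$ is topologically equivalent (indeed conjugate) to the Li\'enard flow, and $(X,Y)$ solves \eqref{lienard-general} iff $E_h(X,Y)$ solves \eqref{sisgeneral} on $S_h$.

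The computations are elementary; the only real content, and the main thing to get right, is the choice of momentum $Y=x-a_{11}q(z)-a_{22}z$. It is forced by requiring $\dot X=x$ to take the form $Y-F(X)$, and it is exactly this combination that turns $W(z)x$ into a total derivative and makes it cancel in $\dot Y$; with any other choice the $W(z)x$ term survives and the reduced field fails to be Li\'enard. A secondary point to verify is that $\Pi$ is a global homeomorphism rather than a mere local chart, which holds here because the constraint determines $y$ uniquely and $(x,z)\mapsto(z,\,x-a_{11}q(z)-a_{22}z)$ is a global diffeomorphism of the plane.
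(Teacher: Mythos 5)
Your proposal is correct and follows essentially the same route as the paper: the paper reaches the very same Li\'enard variables $(X,Y)=\bigl(z,\;x-a_{11}q(z)-a_{22}z\bigr)$, only it gets there in two steps (an intermediate linear change $(\overline{x},\overline{y},\overline{z})=(x,\,a_{22}x-a_{12}y,\,z)$, then elimination of $\overline{y}$ using the constraint on $S_h$), whereas you eliminate $y$ directly from the constraint and change variables once; the key computation --- the cancellation of the $a_{11}W(z)x$ term in $\dot Y$ --- is identical in both. One useful byproduct of your derivation: your formula $y=\frac{1}{a_{12}}\bigl[(a_{22}^{2}+a_{12}a_{21})X+a_{22}Y+h\bigr]$ for the second entry of $E_h$ is the correct one and agrees with what the paper's own proof computes at its final step, while the displayed equation \eqref{condiciones-iniciales-general} in the statement, which reads $\frac{1}{a_{12}}\bigl[(a_{22}^{2}+a_{12}a_{21})Y-a_{22}Y+h\bigr]$, contains typos (the first $Y$ should be $X$, and the term $a_{22}Y$ should enter with a plus sign).
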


\begin{proof}
First, with $a_{12}\neq0$ the change of variables
\begin{equation}
\overline{x}=x,\quad\overline{y}=a_{22}x-a_{12}y,\quad\overline{z}%
=z\label{cambio-nuevo}%
\end{equation}
\ transforms system \eqref{sisgeneral} into the system
\begin{align}
\dot{\overline{x}} &  =f_{1}\left(  \overline{z}\right)  \overline
{x}-\overline{y},\label{lienard1B}\\
\dot{\overline{y}} &  =f_{2}\left(  \overline{z}\right)  \overline
{x},\nonumber\\
\dot{\overline{z}} &  =\overline{x},\nonumber
\end{align}
where the functions $f_{1}$ and $f_{2}$ are defined as%
\begin{equation}
f_{1}(\overline{z})=a_{11}W(\overline{z})+a_{22},\quad\quad f_{2}\left(
\overline{z}\right)  =a_{22}a_{11}W(\overline{z})-a_{12}a_{21}%
.\label{f1f2continuas}%
\end{equation}
From Proposition \ref{theor:1}, the invariant manifolds \eqref{Sh-general} for
system \eqref{lienard1B}-\eqref{f1f2continuas} can be written in the new
variables as
\begin{equation}
\widetilde{S}_{h}=\{(\overline{x},\overline{y},\overline{z})\in\mathbb{R}%
^{3}:-\overline{y}+g(\overline{z})=h\}.\label{sh2general}%
\end{equation}
Now, replacing the condition given in \eqref{sh2general} in the first equation
of \eqref{lienard1B} and removing the unnecessary second equation, we obtain
the system
\begin{equation}%
\begin{array}
[c]{l}%
\dot{\overline{x}}=f_{1}\left(  \overline{z}\right)  \overline{x}%
-g(\overline{z})+h,\\
\dot{\overline{z}}=\overline{x}.
\end{array}
\label{lienard2B}%
\end{equation}
where the function $g$ is defined by
\begin{equation}
g(u)=a_{11}a_{22}q(u)-a_{12}a_{21}u.\label{funcionesG-general}%
\end{equation}
After the change of variables
\begin{equation}%
\begin{array}
[c]{l}%
X=\overline{z},\\
Y=-\tilde{F}(\overline{z})+\overline{x},
\end{array}
\label{cambio2B}%
\end{equation}
where $F$ is
\begin{equation}
\tilde{F}(z)=a_{11}q(z)+a_{22}z,\label{funcionesF-general}%
\end{equation}
we obtain
\[
\dot{X}=\dot{\overline{z}}=\overline{x}=Y+\tilde{F}(X)=Y-(-\tilde{F}(X)),
\]
so that
\begin{align*}
\dot{Y} &  =-\tilde{F}^{\prime}(\overline{z})\dot{\overline{z}}+\dot
{\overline{x}}=-\left(  a_{11}q^{\prime}(\overline{z})+a_{22}\right)  +\left(
f_{1}\left(  \overline{z}\right)  \overline{x}-g(\overline{z})+h\right)  =\\
&  =-f_{1}\left(  \overline{z}\right)  \overline{x}+f_{1}\left(  \overline
{z}\right)  \overline{x}-g(\overline{z})+h=-g(\overline{z})+h,
\end{align*}
and taking $F(X)=-\tilde{F}(X)$ we obtain system \eqref{lienard-general}-\eqref{funcionesFG-general}.

If $\left(  X\left(  \tau\right)  ,Y\left(  \tau\right)  \right)
\in\mathbb{R}^{2}$ is a solution of system
\eqref{lienard-general}-\eqref{funcionesFG-general} for a given $h\in
\mathbb{R}$, we have from \eqref{cambio2B} that
\[%
\begin{pmatrix}
\overline{x}(\tau)\\
\overline{z}(\tau)
\end{pmatrix}
=%
\begin{pmatrix}
Y(\tau)-F(\overline{z}(\tau))\\
X(\tau)
\end{pmatrix}
\]
is a solution of system \eqref{lienard2B}. From \eqref{sh2general}, we obtain
on $\widetilde{S}_{h}$ that $\overline{y}=g(\overline{z})-h$, with $g$ as in
\eqref{funcionesG-general}. Thus,
\[%
\begin{pmatrix}
\overline{x}(\tau)\\
\overline{y}(\tau)\\
\overline{z}(\tau)
\end{pmatrix}
=%
\begin{pmatrix}
Y(\tau)-F(X(\tau))\\
g\left(  X(\tau)\right)  -h\\
X(\tau)
\end{pmatrix}
,
\]
is a solution of system \eqref{lienard1B} on $S_{h}$. Finally, from
\eqref{cambio-nuevo} we obtain for system \eqref{sisgeneral} the solution
$x(\tau)=\overline{x}(\tau)$,
\begin{align*}
y(\tau)  &  =\dfrac{1}{a_{12}}\left[  a_{22}\overline{x}(\tau)-\overline
{y}(\tau)\right]  =\dfrac{1}{a_{12}}\left[  a_{22}Y(\tau)-a_{22}%
F(X(\tau))-g\left(  X(\tau)\right)  +h\right] \\
&  =\dfrac{1}{a_{12}}\left[  a_{22}Y(\tau)+a_{22}\tilde{F}(X(\tau))-g\left(
X(\tau)\right)  +h\right]  ,
\end{align*}
and $z(\tau)=\overline{z}(\tau)$. The conclusion follows from the fact that
for all $X$ we have
\[
a_{22}\tilde{F}(X)-g(X)=(a_{22}^{2}+a_{12}a_{21})X.
\]

\end{proof}

In order to apply the analysis performed to system \eqref{forma-canonica-1A},
in what follows we consider the function $q$ defined by a cubic polynomial,
that is, we assume
\begin{equation}
W(z)=3cz^{2}+2az+b,\quad q(z)=cz^{3}+az^{2}+bz,\label{cubica}%
\end{equation}
with $c\neq0$. As a direct consequence of Propositions \ref{theor:1} and
\ref{theor2-cubic}, we obtain the next result.

\begin{corollary}
\label{theor-cubic-q}Consider system \eqref{sisgeneral} with the functions $q$
and $W$ defined as in \eqref{cubica}. If $a_{12}\neq0$, then on each invariant
set $S_{h}$ given by%
\[
S_{h}=\{(x,y,z)\in\mathbb{R}^{3}:-a_{22}x+a_{12}y+a_{11}a_{22}cz^{3}%
+aa_{11}a_{22}z^{2}+(ba_{11}a_{22}-a_{12}a_{21})z=h\}
\]
the dynamics is topologically equivalent to the Li\'{e}nard system
\begin{equation}%
\begin{split}
\dot{x} &  =y+ca_{11}x^{3}+aa_{11}x^{2}+\left(  ba_{11}+a_{22}\right)  x,\\
\dot{y} &  =-a_{11}a_{22}cx^{3}-a_{11}a_{22}ax^{2}+(a_{12}a_{21}-a_{11}%
a_{22}b)x+h.
\end{split}
\label{lienard-curbica1}%
\end{equation}
Moreover, $(x\left(  \tau\right)  ,y\left(  \tau\right)  )\in\mathbb{R}^{2}$
is a solution of the Li\'{e}nard system \eqref{lienard-curbica1} for a given
$h\in\mathbb{R}$, if and only if $E_{h}\left(  x\left(  \tau\right)  ,y\left(
\tau\right)  \right)  $ $\in\mathbb{R}^{3}$\ is a solution of system
\eqref{sisgeneral} on $S_{h},$ where
\begin{equation}
E_{h}\left(  x\left(  \tau\right)  ,y\left(  \tau\right)  \right)  =%
\begin{pmatrix}
y(\tau)+ca_{11}^{3}x(\tau)^{3}+aa_{11}^{2}x(\tau)^{2}+\left(  ba_{11}%
+a_{22}\right)  x(\tau)^{2}\\
\frac{1}{a_{12}}\left[  (a_{22}^{2}+a_{12}a_{21})y\left(  \tau\right)
-a_{22}y\left(  \tau\right)  +h\right]  \\
x\left(  \tau\right)
\end{pmatrix}
.\label{solucion-ap}%
\end{equation}

\end{corollary}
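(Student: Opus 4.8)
The plan is to obtain the statement as a pure specialization of Propositions \ref{theor:1} and \ref{theor2-cubic} to the cubic data \eqref{cubica}; no new argument is needed, only the bookkeeping that renders the general formulas in explicit polynomial form. Concretely, I would verify three things in turn: the displayed equation for $S_h$, the explicit Li\'enard system \eqref{lienard-curbica1}, and the solution map \eqref{solucion-ap}.

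First I would substitute $q(z)=cz^{3}+az^{2}+bz$ into the general invariant-manifold equation \eqref{Sh-general}. Expanding $a_{11}a_{22}q(z)$ produces the cubic and quadratic terms $a_{11}a_{22}cz^{3}+aa_{11}a_{22}z^{2}$, while collecting the linear terms $ba_{11}a_{22}z$ and $-a_{12}a_{21}z$ gives the coefficient $(ba_{11}a_{22}-a_{12}a_{21})$; this reproduces the stated $S_h$ verbatim. Next I would specialize the Li\'enard vector field through the formulas \eqref{funcionesFG-general}, namely $F(X)=-a_{11}q(X)-a_{22}X$ and $g(X)=a_{11}a_{22}q(X)-a_{12}a_{21}X$. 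With the cubic $q$ these become $-F(X)=ca_{11}X^{3}+aa_{11}X^{2}+(ba_{11}+a_{22})X$ and $-g(X)=-ca_{11}a_{22}X^{3}-aa_{11}a_{22}X^{2}+(a_{12}a_{21}-ba_{11}a_{22})X$, so that $\dot{X}=Y-F(X)$ and $\dot{Y}=-g(X)+h$ match system \eqref{lienard-curbica1} term by term. Topological equivalence on each $S_h$ then follows immediately from Proposition \ref{theor2-cubic}.

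Finally, the solution correspondence is read off from \eqref{condiciones-iniciales-general}: its first entry $Y-F(X)$ becomes the explicit cubic polynomial in $X$ after inserting the cubic $F$, and its third entry is simply $X$. For the middle entry I would use the identity $a_{22}\tilde{F}(X)-g(X)=(a_{22}^{2}+a_{12}a_{21})X$ proved within Proposition \ref{theor2-cubic}; the key observation is that the cubic and quadratic contributions of $q$ cancel in $a_{22}\tilde{F}-g$, leaving a linear expression even in the cubic case, which is exactly what keeps the middle component affine. Since every entry is a verbatim specialization of a formula already established in full generality, I expect no genuine conceptual obstacle. The only point requiring care is to track signs and the grouping of the linear coefficients consistently across $S_h$, the Li\'enard field, and the map $E_h$, so that the three specializations remain mutually compatible.
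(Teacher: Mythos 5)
Your proposal is correct and follows exactly the paper's own route: the paper gives no separate proof, presenting the corollary as a ``direct consequence'' of Propositions \ref{theor:1} and \ref{theor2-cubic}, which is precisely the substitution of the cubic $q$ into \eqref{Sh-general}, \eqref{funcionesFG-general} and \eqref{condiciones-iniciales-general} that you carry out. One remark: performing your specialization carefully gives $y(\tau)+ca_{11}x(\tau)^{3}+aa_{11}x(\tau)^{2}+\left(ba_{11}+a_{22}\right)x(\tau)$ for the first entry of $E_{h}$ and $\frac{1}{a_{12}}\left[(a_{22}^{2}+a_{12}a_{21})x(\tau)+a_{22}y(\tau)+h\right]$ for the second, so the stray exponents on $a_{11}$ and on the last $x(\tau)$, and the appearance of $y(\tau)$ in place of $x(\tau)$ in the middle entry of \eqref{solucion-ap} as printed, are typographical slips in the paper rather than anything your argument fails to capture.
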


In the next Proposition, we show that system \eqref{lienard-curbica1} can be
written into the form \eqref{unfolding}.

\begin{proposition}
\label{canonical-forms} The following statements hold for system \eqref{lienard-curbica1}.

\begin{itemize}
\item[(a)] If $a_{22}\neq0$ and \ $a_{11}a_{22}<0$ then the system can be
written into the form%
\begin{equation}
\dot{x}=y,\quad\dot{y}=\mu_{1}+\mu_{2}x+cx^{3}+\mu_{3}y+3ca_{11}x^{2}y.
\label{forma-canonica-1}%
\end{equation}
where the new parameters $\mu_{1},\mu_{2}$ and $\mu_{3}$ are given by
\begin{equation}
\label{parameters-canonica-1}%
\begin{split}
\mu_{1}  &  =\frac{27ch+a_{11}a_{22}a(9cb-2a^{2})-9caa_{12}a_{21}}%
{27c^{2}\left(  -a_{11}a_{22}\right)  ^{5/2}},\\
\mu_{2}  &  =\frac{a_{11}a_{22}(a^{2}-3cb)+3ca_{12}a_{21}}{3c\left(
a_{11}a_{22}\right)  ^{2}},\quad\mu_{3}=\frac{a_{11}(a^{2}-3cb)-3ca_{22}%
}{3ca_{11}a_{22}}.
\end{split}
\end{equation}

\item[(b)] If $a_{22}=0$ then the system can be written into the form%
\begin{equation}
\dot{x}=y,\quad\dot{y}=\mu_{1}+\mu_{2}x+\mu_{3}y+3ca_{11}x^{2}y,
\label{forma-canonica-2}%
\end{equation}
where the new parameters $\mu_{1},\mu_{2}$ and $\mu_{3}$ are defined by%
\begin{equation}
\mu_{1}=h-\frac{aa_{12}a_{21}}{3c},\quad\mu_{2}=a_{12}a_{21},\quad\mu
_{3}=ba_{11}-\frac{a^{2}a_{11}}{3c}. \label{parameters-canonica-2}%
\end{equation}

\end{itemize}
\end{proposition}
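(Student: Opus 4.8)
The plan is to turn \eqref{lienard-curbica1} into a second-order equation and then normalize it by an affine change of the spatial variable together with a time rescaling. Observe first that \eqref{lienard-curbica1} has the Li\'enard form $\dot x = y + P(x)$, $\dot y = Q(x)+h$, where $P(x)=ca_{11}x^3+aa_{11}x^2+(ba_{11}+a_{22})x$ and $Q(x)=-a_{11}a_{22}cx^3-a_{11}a_{22}ax^2+(a_{12}a_{21}-a_{11}a_{22}b)x$ depend only on $x$. Differentiating the first equation and eliminating $y$ gives, with $v:=\dot x$, the planar system $\dot x=v$, $\dot v = Q(x)+h+P'(x)\,v$. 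This already exhibits the Bogdanov--Takens monomial pattern: the pure-$x$ part is the cubic $Q(x)+h$, while $P'(x)=3ca_{11}x^2+2aa_{11}x+(ba_{11}+a_{22})$ supplies the $x^2v$, $xv$ and $v$ terms, the $x^2v$ coefficient being exactly $3ca_{11}$ as required.

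The key step is a single translation $x=\hat x - \tfrac{a}{3c}$. I would check that this one shift simultaneously completes the cube in $Q$ (removing its $x^2$ term) and completes the square in $P'$ (removing the $xv$ term): both quadratic coefficients are proportional to $a$ with the matching factor $c$, so the vertex abscissa is $-\tfrac{a}{3c}$ in both cases. After the shift the $v$-equation involves only the monomials $1,\ x,\ x^3,\ v,\ x^2v$, which is precisely the shape of the right-hand side of \eqref{forma-canonica-1}.

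In case (a), with $a_{22}\neq0$ and $a_{11}a_{22}<0$, it remains to rescale. I would set $\hat x=\lambda X$, $v=\mu Y$, $t=\sigma\tau$ and impose $\mu\sigma=\lambda$ (so that $\dot X=Y$), $\sigma\lambda^2=1$ (fixing the $x^2Y$ coefficient at $3ca_{11}$) and $\sigma^2\lambda^2(-a_{11}a_{22})=1$ (fixing the $X^3$ coefficient at $c$). These force $\sigma=(-a_{11}a_{22})^{-1}$, $\lambda=(-a_{11}a_{22})^{1/2}$ and $\mu=(-a_{11}a_{22})^{3/2}$, and it is precisely the hypothesis $a_{11}a_{22}<0$ that makes these quantities real and positive. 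Reading off the transformed constant, linear-in-$x$ and linear-in-$Y$ coefficients and simplifying then yields $\mu_1,\mu_2,\mu_3$ as stated in \eqref{parameters-canonica-1}.

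Case (b) is both simpler and forced: when $a_{22}=0$ the cubic coefficient $-a_{11}a_{22}c$ and the quadratic coefficient $-a_{11}a_{22}a$ of $Q$ vanish identically, so no cubic term is present, and the normalizing rescaling above is neither available (it would require $a_{11}a_{22}\neq0$) nor necessary. The same translation $x=\hat x-\tfrac{a}{3c}$ alone eliminates the remaining $xv$ term and casts the system into \eqref{forma-canonica-2}, from which $\mu_1,\mu_2,\mu_3$ in \eqref{parameters-canonica-2} are read off directly. I expect the only real difficulty to be bookkeeping rather than conceptual: verifying that the single shift $-\tfrac{a}{3c}$ clears both quadratic terms at once, and then propagating the translated coefficients through the rescaling without sign errors or mistakes in the powers of $(-a_{11}a_{22})$, most delicately in the constant term that produces $\mu_1$.
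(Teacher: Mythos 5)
Your proposal is correct, and in essence it is the paper's own argument (an affine change of variables plus a time rescaling), but organized in the opposite order, which buys a small simplification. The paper first translates inside the Li\'enard plane, and therefore must shift \emph{both} variables: besides $u=x+\frac{a}{3c}$ it needs the compensating translation $v=y+P\bigl(-\frac{a}{3c}\bigr)$ to absorb the constant that the shift creates in the $\dot x$ equation; it then rescales $u$ and time, and only at the very end passes to the second-order Bogdanov--Takens form by computing $\ddot x$. By writing the system as $\dot x=v$, $\dot v=Q(x)+h+P'(x)v$ \emph{first}, you eliminate $y$ at the outset, so the single shift $x\mapsto x-\frac{a}{3c}$ suffices; your check that this one shift kills both the $x^{2}$ term of $Q$ and the $x$ term of $P'$ is right (note $Q(x)=-a_{22}P(x)+(a_{22}^{2}+a_{12}a_{21})x$, so both cubics have their inflection abscissa at $-\frac{a}{3c}$), and your scaling constants $\sigma=(-a_{11}a_{22})^{-1}$, $\lambda=(-a_{11}a_{22})^{1/2}$, $\mu=(-a_{11}a_{22})^{3/2}$ coincide with the paper's. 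One caveat about the bookkeeping you defer: carried out, it yields
\[
\mu_{1}=\frac{27c^{2}h+a_{11}a_{22}a(9cb-2a^{2})-9caa_{12}a_{21}}{27c^{2}\left(-a_{11}a_{22}\right)^{5/2}},
\]
with $27c^{2}h$ in the numerator rather than the $27ch$ printed in \eqref{parameters-canonica-1}; the printed formula is a misprint (the two agree when $c=1$, which is why the application formulas \eqref{parameters-ap1} are unaffected). So your assertion that the coefficients come out ``as stated'' holds only modulo this misprint, while your $\mu_{2}$, $\mu_{3}$ and all of case (b) do match the stated formulas exactly.
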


\begin{proof}
First, the change of variables
\[
u=x+\frac{a}{3c},\quad v=y+\frac{2}{27}\frac{a^{3}}{c^{2}}a_{11}-\frac{1}%
{3}\frac{a}{c}a_{22}-\frac{1}{3}a\frac{b}{c}a_{11},
\]
transforms system \eqref{lienard-curbica1} into
\begin{equation}%
\begin{split}
\dot{u} &  =v+ca_{11}u^{3}+\lambda_{1}u,\\
\dot{v} &  =-ca_{11}a_{22}u^{3}+\lambda_{2}u+\lambda_{3},
\end{split}
\label{sis-auxiliar-A}%
\end{equation}
where the new parameters are
\begin{equation}%
\begin{split}
\lambda_{1} &  =a_{22}+ba_{11}-\frac{1}{3}\frac{a^{2}}{c}a_{11},\quad
\lambda_{2}=a_{12}a_{21}-ba_{11}a_{22}+\frac{1}{3}\frac{a^{2}}{c}a_{11}%
a_{22},\\
\lambda_{3} &  =h+\frac{1}{3}a\frac{b}{c}a_{11}a_{22}-\frac{1}{3}\frac{a}%
{c}a_{12}a_{21}-\frac{2}{27}\frac{a^{3}}{c^{2}}a_{11}a_{22}.
\end{split}
\label{parameter-auxiliar}%
\end{equation}
If $a_{11}a_{22}<0,$ the change of variable
\[
x=\frac{1}{\left(  -a_{11}a_{22}\right)  ^{1/2}}u,\quad y=v,\quad\tau=\frac
{1}{-a_{11}a_{22}}t,
\]
transforms system \eqref{sis-auxiliar-A}-\eqref{parameter-auxiliar} \ into
\begin{align*}
\dot{x} &  =\frac{1}{\left(  -a_{11}a_{22}\right)  ^{3/2}}y+ca_{11}x^{3}%
-\frac{\lambda_{1}}{a_{11}a_{22}}x,\\
\dot{y} &  =\frac{\lambda_{3}}{-a_{11}a_{22}}+\frac{\lambda_{2}}{\left(
-a_{11}a_{22}\right)  ^{1/2}}x+c\left(  -a_{11}a_{22}\right)  ^{3/2}x^{3}%
\end{align*}
and taking into account that
\[
\ddot{x}=\frac{1}{\left(  -a_{11}a_{22}\right)  ^{3/2}}\dot{y}+3ca_{11}%
x^{2}\dot{x}-\frac{\lambda_{1}}{a_{11}a_{22}}\dot{x},
\]
and after some algebra, statement (a) follows. \newline If $a_{22}=0,$ then
from system \eqref{sis-auxiliar-A}, we obtain statement (b) after a direct computation.
\end{proof}

\end{document}